\def\imod#1{\allowbreak\mkern10mu({\operator@font mod}\,\,#1)}
\renewcommand*\env@matrix[1][*\c@MaxMatrixCols c]{%
  \hskip -\arraycolsep
  \let\@ifnextchar\new@ifnextchar
  \array{#1}}
\theoremstyle{plain}
\newtheorem{theorem}{Theorem}[section]
\newtheorem{corollary}[theorem]{Corollary}
\newtheorem{lemma}[theorem]{Lemma}
\newtheorem{remark}[theorem]{Remark}
\newtheorem{proposition}[theorem]{Proposition}
\newtheorem{definition}[theorem]{Definition}
\newtheorem{example}[theorem]{Example}
\theoremstyle{definition}
\newcommand{\dlabel}[1]{\ifmmode \text{\ttfamily \upshape [#1] } \else
	{\ttfamily \upshape [#1] }\fi \label{#1}}
\newcommand{\cyc}{\mathscr{C}}
\newcommand{\pa}{\mathscr{P}}
\newcommand{\M}{\operatorname{\mathscr{M}} }
\newcommand{\Sym}{\mathfrak{S}} 
\newcommand{\bigxone}{\mbox{\normalfont\Large\bfseries $X_1$}} 
\newcommand{\bigxtwo}{\mbox{\normalfont\Large\bfseries $X_2$}} 
\newcommand{\bigxthree}{\mbox{\normalfont\Large\bfseries $X_3$}}
\newcommand{\biga}{\mbox{\normalfont\Large\bfseries $\alpha$}}
\newcommand{\cent}{\mathcal{Z}} 
\newcommand{\orb}{\mathcal{O}} 
\newcommand{\Sn}{\operatorname{Sym} }
\newcommand{\Aut}{\operatorname{Aut} } 
\title[Cycle Matrices for SYBE]{Cycle matrices: A combinatorial approach to the solutions of Quantum Yang-Baxter Equations}
\author[A. Kanrar]{Arpan Kanrar}
\email{arpankanrar000@gmail.com, arpankanrar@hri.res.in}
\author[S. Panja]{Saikat Panja}
\email{panjasaikat300@gmail.com, saikatpanja@hri.res.in}
\address{Harish-Chandra Research Institute- Main Building, Chhatnag Rd, Jhusi, Uttar Pradesh 211019, India}
\thanks{Kanrar A. is supported by SRF-PhD fellowship from HRI. Panja S. acknowledges the support of HRI through PDF-M fellowship}
\date{\today}
\subjclass[2020]{}
\keywords{Quantum Yang-Baxter Equation, Solution, Braces, cycle set}
\begin{document}
\setlength{\baselineskip}{15pt}
\maketitle
\begin{abstract}
    An $n\times n$ matrix $M=[m_{ij}]$ with $m_{ij}\in U_n=\{1,2,\ldots,n\}$ will be called a cycle matrix if $(U_n,\cdot)$ is a cycle set, where $i\cdot j=m_{ij}$. We study these matrices in this article. Using these matrices, we give some recipes to construct solutions, which include the multipermutation level $2$ solutions. As an application of these, we construct a multipermutation solution of level $r$ for all $r\geq 1$. Our method gives alternate proof that the class of permutation groups of solutions contains all finite abelian groups.
\end{abstract}
\section{Introduction}\label{sec: intro}
V. G. Drinfeld suggested the study of the set-theoretic solution to the quantum Yang-Baxter equation (QYBE) (see \cite{Dr90}).
A \emph{set-theoretic solution to the QYBE} is an ordered pair $(X,R)$, where $X$ is a set and $R:X\times X\longrightarrow X\times X$  is a map, satisfying
\begin{align*}
    R^{12}R^{13}R^{23}=R^{23}R^{13}R^{12},
\end{align*}
with $R^{ij}:X\times X\times X\longrightarrow X\times X\times X$ being the map acting on the $(i,j)$-th position by $R$.
This has been an important research topic for the last two decades after the breakthrough \cite{EtScSo99} by P. Etingof, T. Schedler and A. Soloviev.
Although such solutions were constructed by A. D. Weinstein and P. Xu (see \cite{WeXu}) and by J. H. Lu, M. Yan and Y. C. Zhu (see \cite{LuYaZh00}) independently, 
the paper by Etingof \emph{et. al.} further studies solutions with additional conditions of \emph{nondegeneracy} of $R$ (i.e. if $R(x,y)=(g_x(y),f_y(x))$ then $g_x,f_y$ are bijective for all $x,y\in X$) and $R$ being involutive (i.e. $R^2=\text{Id}_{X\times X}$) and derive powerful consequences. In this current article, by a \emph{solution}, we will mean finite non-degenerate involutive set-theoretic solution to the QYBE (in short, SYBE). Before proceeding further we will mention a few definitions. The \emph{permutation group of a solution} is the group defined as
\begin{align*}
    \mathcal{G}=\langle f_x:x\in X\rangle.
\end{align*}
A solution $(X,R)$ is said to be \emph{decomposable} if there exists disjoint subsets $X_1,X_2$ of $X$ such that $R(X_i\times X_i)\subseteq X_i\times X_i$, $(X_i,R\vert _{X_i\times X_i})$ is a solution and $X=X_1\bigsqcup X_2$. 
The solution will be called \emph{indecomposable} in case of the non-existence of such a pair $X_1,~X_2$ (it is equivalent that the natural action of the permutation group $\mathcal{G}$ on $X$ is transitive \cite{EtScSo99}).
In the paper \cite{EtScSo99}, the authors have shown that there exists a unique (up to
isomorphism) indecomposable solution order $p$, where $p$ is
a prime (see section $2.5$ and $2.6$ therein). After that several attempts have been made to look after and analyze solutions in case $|X|$ is a composite number.
For example, W. Rump has studied the decomposability of square-free solutions in \cite{Ru05}, S. Ram\'irez and L. Vendramin have studied the decomposability of solutions in \cite{RaVe22}.
Recently in an article Agore, Chirvasitu and, Militaru have proved many counting results for solutions of Frobenius-Separability (FS) type using the category of pointed Kimura semigroups (see \cite{AgChMi23}). 
Our goal here is to introduce counting methods for SYBEs using algebraic tools. Furthermore, our results give glimpses of the structure of solutions in the case of isomorphic classes of decomposable solutions, in terms of these matrices.

The concept of cycle sets was introduced by W. Rump in \cite{Ru05} and has been a source of SYBEs (see \cite{Jes16}, \cite{Rum22} and the references therein). A \emph{cycle set} is a tuple
$(X,\cdot)$ such that 
the map $\psi_x:y\mapsto x\cdot y$ is invertible, and
\begin{align*}
    (x\cdot y)\cdot (x\cdot z)=(y\cdot x)\cdot(y\cdot z),
\end{align*}
for all $x,y,z\in X$. A cycle set will be called \emph{non-degenerate} if the map $\varphi:x\mapsto x\cdot x$ is bijective. Finally, call a cycle set to be 
\emph{square-free} if $\varphi$ is the identity map. 
An important result about cycle sets states that there is a bijection between non-degenerate cycle sets and set of all SYBEs (\cite[Proposition $1$]{Ru05}).
For two solutions $(X,\cdot)$ and $(Y,\bullet)$, a \emph{homomorphism} $f$ from $(X,\cdot)$ to $(Y,\bullet)$ is a set theoretical map $f:X\longrightarrow Y$ such that $f(x\cdot y)=f(x)\bullet f(y)$ for all $x\in X$ and $y\in Y$. A bijective homomorphism from $(X,\cdot)$ to itself will be called an \emph{automorphism}. The set of all automorphisms of $(X,\cdot)$ will be
denoted by $\Aut(X,\cdot)$. \emph{We further restrain ourselves to solutions coming from cycle sets.}
 A solution $(X,\cdot)$ is called a \emph{permutation solution} if, for any $x\in X,$ $\psi_x=\sigma$ for some permutation $\sigma$ and,
it will be called a \emph{trivial solution} if $\sigma=id$. We will denote the permutation solution corresponding to $\sigma$ by $(X,\cdot_\sigma)$. 
A solution is \emph{irretractable} if the natural map $x\mapsto \psi_x$ is injective, otherwise it will be called
\emph{retractable}. The relation, $x\sim y$ if and only if $\psi_x=\psi_y$, is an equivalence relation and, the equivalence class of $x$ will be denoted by $\bar{x}$, and the set of equivalence classes will be denoted by $\bar{X}$ or $Ret(X)$. 
In \cite[pp. 157]{Ru07}, it is shown that $\Bar{x}\cdot\Bar{y}:=\overline{x\cdot y}$ defines a cycle set on $\bar{X}$.
A retractable solution is a \emph{multipermutation of level $n$}, if $n$ is the least positive integer such that $|Ret^n(X)|=1.$
\subsection*{Notations} We set some notations here. The group of all bijections of $n$ indeterminates $x_1,x_2,\cdots,x_n$ will be denoted by $\Sym_{x_1,x_2,\ldots,x_n}$. In case $x_i=i$, this group will be identified with the symmetric group on $n$ letters and will be denoted by $\Sym_n$. By $\Sn(i_1,i_2,\ldots,i_k)$ we mean the set of all bijections of $\{i_1,i_2,\ldots,i_k\}$. 
For a group $G$ and a $G$-space $Y$, the orbit of an element $y\in Y$ will be denoted as $\orb_G(y)$. The set $\{1,2,\ldots, m\}$ will be denoted as $U_m$.
The centralizer of an element $g\in G$ will be denoted by $\cent_{G}(g)$.
The number of partitions of a positive number $n$ will be denoted by $\mathscr{P}(n)$. A row of a matrix will be written in a square bracket $[~]$ and 
for a column, we will use the notation $[~]^t$.
 Other notations are standard.
\subsection*{Organization of the paper} The paper is organized as follows; In \cref{sec:general-results} we define the main object of the study, the cycle matrix.
We observe some properties of the cycle matrix and prove that the cycle matrices which give rise to decomposable solutions are singular. 
We have shown that the product of solutions has correspondence with the tensor product of cycle matrices. We define a transpose cycle matrix, which corresponds to a special kind of irretractable solution. 
We construct a collection of such cycle matrices. In \cref{sec:sn-action} we define an action of the symmetric group $\Sym_n$ on the set of all $n\times n$ cycle matrices.
Using this action we prove that the number of permutation solutions of order $n$ (up to isomorphism) is the number of partitions of $n$ and $\Aut(U_n,\cdot_\sigma)$ is the centralizer of $\sigma$ in $\Sym_n.$ \cref{sec:construction-old-new} is devoted to the construction of new solutions. 
Firstly we start with two trivial solutions $(U_n,\cdot_{id}),~(U_m,\cdot_{id})$ and construct a collection of solutions of multipermutation level $2$ on $U_{n+m}$ with respect to any partition of $U_n$. 
As a consequence of the construction we obtain the well-known result \cite{CeJeDe10} that ``all finite  abelian groups are permutation groups". In this section, we further demonstrate the construction of different solutions on $U_{\sum m_i}$, from a given finite collection of solutions $\{(U_{m_i},\cdot_i)\vert i\in I\}$. Lastly, \cref{sec:multiperm-solutions}, is devoted to the construction of a multipermutation solution of level $n$, for $n\ge 1.$

\section{Results in generalities}\label{sec:general-results}
We start with the definition of a cycle matrix. This will be followed by a few examples and properties of such a matrix.
\begin{definition}[Cycle matrix]
    An $n\times n$ matrix $M=(m_{ij})$ with all entries from $\{1,2,\ldots,n\}$ is said to be a \emph{cycle matrix} if $(U_n,\cdot)$ is a non-degenerate cycle set and, $i\cdot j= m_{ij}$. Two such matrices $(m_{ij}), (m^\prime_{ij})$ will be called \emph{isomorphic} if the corresponding cycle sets are isomorphic to each other. The set of all $n\times n$ cycle matrices will be denoted by $\cyc_n$. Further, a cycle matrix will be called \emph{indecomposable}, if the corresponding SYBE (coming from the cycle set of the matrix) is indecomposable.
\end{definition}
\begin{example}
Considering $U_4=\{1,2,3,4\}$ the following two matrices 
\begin{align*}
    \begin{bmatrix}
    1 & 2 & 3 & 4\\
    1 & 2 & 3 & 4\\
    1 & 2 & 3 & 4\\
    1 & 2 & 3 & 4
    \end{bmatrix},
    \begin{bmatrix}
    1 & 2 & 3 & 4\\
    1 & 2 & 3 & 4\\
    1 & 2 & 3 & 4\\
    1 & 3 & 2 & 4
    \end{bmatrix}
\end{align*}
are two non-isomorphic cycle matrices.
\end{example}
\begin{lemma}\label{lem:existence-of-matrices}
	For any permutation $\sigma\in\Sym_n$, there is a cycle matrix whose diagonal is $\sigma$. In particular $\cyc_n\neq\emptyset$.
\end{lemma}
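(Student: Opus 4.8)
The plan is to realize the prescribed diagonal directly by means of a permutation solution. Given $\sigma\in\Sym_n$, I would set $i\cdot j:=\sigma(j)$ for all $i,j\in U_n$; in the terminology of the excerpt this is exactly the permutation solution $(U_n,\cdot_\sigma)$, for which $\psi_i=\sigma$ is independent of $i$ and hence invertible. The associated matrix $M=(m_{ij})$ then has $m_{ij}=\sigma(j)$, so every row equals $(\sigma(1),\ldots,\sigma(n))$, and in particular the diagonal entries are $m_{ii}=\sigma(i)$; reading the diagonal off as a permutation recovers $\sigma$ exactly, which is what we want.

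It remains to confirm that $(U_n,\cdot_\sigma)$ really is a non-degenerate cycle set, so that $M$ qualifies as a cycle matrix. Non-degeneracy is immediate, since $\varphi(i)=i\cdot i=\sigma(i)$ is a bijection because $\sigma$ is a permutation. For the cycle-set identity I would simply substitute the definition: as $x\cdot y=\sigma(y)$ regardless of $x$, both sides $(x\cdot y)\cdot(x\cdot z)$ and $(y\cdot x)\cdot(y\cdot z)$ collapse to $\sigma(\sigma(z))=\sigma^2(z)$, so the identity holds for all $x,y,z$. Alternatively, one may just invoke the fact already recorded in the excerpt that permutation solutions $(U_n,\cdot_\sigma)$ are solutions, i.e.\ arise from non-degenerate cycle sets.

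There is essentially no hard step here: the whole point is that a permutation solution has $\psi_x$ constant in $x$, which trivializes the cycle-set identity while still placing $\sigma$ on the diagonal through $\varphi$. The final clause $\cyc_n\neq\emptyset$ then follows by specializing to any single $\sigma$ — for instance $\sigma=\mathrm{id}$, which produces the matrix with every row equal to $(1,2,\ldots,n)$, namely the first matrix in the preceding example. If anything requires a moment's care, it is only the bookkeeping convention that the diagonal of $M$, read top to bottom, is identified with the permutation $i\mapsto m_{ii}=\sigma(i)$; once that identification is fixed, the statement is immediate.
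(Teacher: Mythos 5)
Your proof is correct and follows exactly the paper's route: both realize the diagonal via the permutation solution $(U_n,\cdot_\sigma)$ with $i\cdot j=\sigma(j)$, whose cycle matrix has constant rows and diagonal $\sigma(i)$. You merely spell out the verification of the cycle-set identity and non-degeneracy, which the paper leaves implicit.
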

\begin{proof}
    Consider the permutation solution corresponding to $\sigma$. Note that in that case, the cycle matrix will be
    \begin{align*}
        \begin{bmatrix}
            \sigma(1) & \sigma(2)&  \cdots& \sigma(n)\\
            \sigma(1) & \sigma(2)&  \cdots& \sigma(n)\\ 
            \vdots & \vdots&  \ddots & \vdots\\
            \sigma(1) &  \sigma(2)&\cdots& \sigma(n)
        \end{bmatrix}.
    \end{align*}
    This finishes the proof.
\end{proof}
\begin{lemma}\label{lem:asymmetric}
    Cycle matrices are not symmetric.
\end{lemma}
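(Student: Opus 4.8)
The plan is to argue by contradiction. Suppose that for some $n\ge 2$ a matrix $M=(m_{ij})\in\cyc_n$ is symmetric, so that the associated non-degenerate cycle set satisfies $i\cdot j=j\cdot i$ for all $i,j\in U_n$. I would then extract a contradiction from the two defining features of a cycle matrix: every row is a permutation of $U_n$ (because each $\psi_i\colon j\mapsto i\cdot j$ is invertible), together with the cycle set law itself.

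The first step is to feed the symmetry relation into the cycle set identity. Since $y\cdot x=x\cdot y$, the right-hand side of
\[
(x\cdot y)\cdot(x\cdot z)=(y\cdot x)\cdot(y\cdot z)
\]
becomes $(x\cdot y)\cdot(y\cdot z)$. Writing $w=x\cdot y$, this reads $w\cdot(x\cdot z)=w\cdot(y\cdot z)$, and cancelling $w$ by the injectivity of $\psi_w$ yields $x\cdot z=y\cdot z$ for all $x,y,z\in U_n$.

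The payoff of this cancellation is that every column of $M$ is forced to be constant: fixing $z$, the entry $m_{xz}=x\cdot z$ is independent of $x$, say $m_{xz}=c_z$. Consequently all rows of $M$ coincide with the single tuple $(c_1,\dots,c_n)$; but each row must be a permutation of $U_n$, so the $c_j$ are pairwise distinct. Applying symmetry one last time gives $c_j=m_{ij}=m_{ji}=c_i$ for all $i,j$, hence $c_1=\cdots=c_n$, which contradicts distinctness precisely when $n\ge 2$.

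I expect the only genuine subtlety to be the cancellation step, where it is essential to invoke exactly the left-invertibility of $\psi_w$ packaged into the cycle set definition; this is what separates cycle matrices from arbitrary symmetric Latin squares (such as the Cayley table of $\mathbb{Z}/n$), which meet the row condition but fail the cycle set law. The case $n=1$, where $M=[1]$ is vacuously symmetric, is a true exception and should be excluded; for every $n\ge 2$ the argument above closes.
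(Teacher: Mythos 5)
Your argument is correct: under global symmetry the cycloid identity collapses to $\psi_w(x\cdot z)=\psi_w(y\cdot z)$ with $w=x\cdot y$, the invertibility of $\psi_w$ gives $x\cdot z=y\cdot z$ for all $x,y,z$, all rows coincide with a single permutation, and symmetry then forces that permutation to be constant --- impossible for $n\ge 2$. This is, however, a genuinely different route from the paper's. The paper argues locally: it assumes only that \emph{one} off-diagonal pair satisfies $m_{ij}=m_{ji}$, applies the cycloid identity with $z=i$ and with $z=j$, cancels $\psi_{i\cdot j}$, and obtains $i\cdot i=j\cdot j$, contradicting the non-degeneracy of the diagonal map $\varphi\colon x\mapsto x\cdot x$. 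The trade-off is instructive: your proof never invokes non-degeneracy, using only the row-permutation property and the cycle set law, but it needs symmetry at \emph{every} entry and so only excludes fully symmetric matrices; the paper's proof leans on non-degeneracy but delivers the stronger conclusion $m_{ij}\neq m_{ji}$ for every $i\neq j$, which is exactly what the remark immediately following the lemma asserts and uses. If you wanted your approach to recover that strengthening you would have to revert to something like the paper's local argument. Your observation that $n=1$ is a genuine exception is correct and applies equally to the paper's formulation.
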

\begin{proof}
Consider the cycle set $(X,\cdot)$ arising from the cycle matrix. Note that for $i\neq j$, by the non-degeneracy we have that $i\cdot i\neq j\cdot j$ for all $i\neq j$. If possible let us assume $m_{ij}=m_{ji}$ for some $i\neq j$. Then we have that
\begin{align*}
    (i\cdot j)\cdot (i\cdot i)&=(j\cdot i)\cdot(j\cdot i)\\
    &=(i\cdot j)\cdot (j\cdot j),
\end{align*}
which is not possible, since $i\cdot i\neq j\cdot j$.
\end{proof}
Note that a cycle matrix $(a_{ij})$ is not only non-symmetric but also $a_{ij}\neq a_{ji}$ for all $i,~j.$
\begin{lemma}\label{lem:intranstive-zero-determinant}
	Let $R$ be a commutative ring with unity. If $G\le \Sym_{x_1,x_2,\ldots,x_n}$ acts intransitively on $\{x_1,x_2,\dots,x_n\}$ by the natural action, then the determinant of the $n\times n$ matrix in which each row is an element of $G$, is zero in $R[x_1,x_2,\dots,x_n]$. 
\end{lemma}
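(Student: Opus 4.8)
The plan is to exploit that each element of $G$ preserves every $G$-orbit, which forces certain sums of columns of the matrix to collapse to scalar multiples of the all-ones vector. Let $O_1,\dots,O_k$ be the orbits of $G$ on $\{x_1,\dots,x_n\}$; intransitivity is exactly the assertion that $k\ge 2$. Denote by $M$ the matrix whose $i$-th row is $(g_i(x_1),\dots,g_i(x_n))$ for some $g_i\in G$, so that its $(i,j)$ entry is $g_i(x_j)$, write $c_1,\dots,c_n$ for its columns, and set $S_t=\sum_{x\in O_t}x\in R[x_1,\dots,x_n]$ for the orbit sums.

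The first and main step is a column computation. Fix an orbit $O_t$ and add up the columns indexed by the variables in $O_t$. In row $i$ this sum equals $\sum_{x_j\in O_t}g_i(x_j)$; since $g_i\in G$ maps $O_t$ bijectively onto itself, this is $S_t$ independently of $i$. Hence $\sum_{x_j\in O_t}c_j$ is the vector all of whose entries equal $S_t$, i.e. $S_t\,\mathbf 1$ with $\mathbf 1=(1,\dots,1)^t$. This is where the orbit structure enters, and it is the only place the hypothesis on $G$ is used.

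Because $k\ge 2$, applying this to $O_1$ and $O_2$ produces two column-combinations $S_1\mathbf 1$ and $S_2\mathbf 1$, both proportional to $\mathbf 1$. Thus $S_2\big(\sum_{x_j\in O_1}c_j\big)-S_1\big(\sum_{x_j\in O_2}c_j\big)=0$, which is a relation $\sum_j\lambda_j c_j=0$ with $\lambda_j=S_2$ on $O_1$, $\lambda_j=-S_1$ on $O_2$, and $\lambda_j=0$ elsewhere. As the orbits are nonempty, $S_1$ and $S_2$ are nonzero polynomials (each a sum of distinct variables with unit coefficients), so the relation is nontrivial.

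It remains to convert this column relation into the vanishing of $\det M$, and this is the only genuinely ring-theoretic point, since over an arbitrary commutative ring dependent columns need not force a zero determinant; I expect this last passage to be the main obstacle. I would dispose of it in one of two ways. \emph{Reduction to a field}: every entry of $M$ is a single variable, so $\det M$ lies in $\mathbb Z[x_1,\dots,x_n]$ with coefficients in $\{0,\pm 1\}$, and it therefore suffices to check $\det M=0$ inside the field $\mathbb Q(x_1,\dots,x_n)$, where the nontrivial linear dependence of the columns found above makes the determinant vanish; the resulting polynomial identity then maps to the desired identity in $R[x_1,\dots,x_n]$ under the ring map $\mathbb Z\to R$. \emph{Alternatively}, working directly over $A=R[x_1,\dots,x_n]$: the relation reads $M\lambda=0$ for $\lambda=(\lambda_1,\dots,\lambda_n)^t$, so multiplying by the adjugate gives $(\det M)\lambda=\operatorname{adj}(M)M\lambda=0$; choosing a coordinate $j\in O_1$ where $\lambda_j=S_2$ is a non-zero-divisor of $A$ (by McCoy's criterion, its coefficients being units) yields $\det M=0$. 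The degenerate case $R=0$ is trivial, and both routes settle the obstacle cleanly.
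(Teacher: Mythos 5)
Your proof is correct, and its core step --- summing the columns indexed by a single orbit to obtain $S_t\mathbf{1}$ --- is exactly the paper's argument (the paper phrases it as elementary column operations replacing one chosen column from each of two distinct orbits by the corresponding orbit-sum of columns). The only divergence is the final passage from column dependence to $\det M=0$, which you identify as the main obstacle and resolve either via the adjugate plus McCoy's criterion or by reduction to $\mathbb{Q}(x_1,\dots,x_n)$; both routes are valid, but the obstacle is illusory. Once two columns have the form $S_1\mathbf{1}$ and $S_2\mathbf{1}$, multilinearity of the determinant in the columns lets you pull out the scalars $S_1,S_2\in R[x_1,\dots,x_n]$, leaving a matrix with two equal columns, whose determinant vanishes over \emph{any} commutative ring; this is what the paper's column-operation argument implicitly does, and it requires no non-zero-divisor hypothesis, no field reduction, and no separate treatment of $R=0$.
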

\begin{proof}
    Let $M$ be an $n\times n$ matrix in which each row is an element of $G$. The subgroup $H$ of $G$ generated by all the rows, acts intransitively on $X$ by the natural action. For an orbit $\orb_H(x_1)$, there is an element $x_r\in X$ such that $x_r\not\in \orb_H(x_1)$. Let $\mathcal{C}_1$ be the set of all columns such that $\bigcup\limits_{C\in \mathcal{C}_1}C=\orb_H(x_1)$. Similarly, we have $\mathcal{C}_r$. The fact $\orb_H(x_1)\cap \orb_H(x_r)=\emptyset$, implies $\mathcal{C}_1\cap\mathcal{C}_2=\emptyset$. Choose arbitrary elements $C_1\in \mathcal{C}_1$, $C_r\in\mathcal{C}_r$ and perform two elementary column operations on $M$, that are replacement of $C_1$ by $\sum\limits_{C\in\mathcal{C}_1}C$ and $C_r$ by $\sum\limits_{C\in\mathcal{C}_r}C$. The new $C_1$ and $C_r$ columns will be $\sum\limits_{x\in \orb_H(x_1)}x[1,1,\cdots,1]^t$ and $\sum\limits_{x\in \orb_H(x_r)}x[1,1,\cdots,1]^t$, as each row is a permutation. Hence determinant of the matrix $M$ in $R[x_1,x_2,\dots,x_n]$ is $0$. 
\end{proof}
\begin{corollary}\label{cor:non-zero-indecomposable}
    A cycle matrix with a non-zero determinant gives an indecomposable solution.
\end{corollary}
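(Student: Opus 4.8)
The plan is to prove the contrapositive: if the solution attached to the cycle matrix $M=(m_{ij})$ is decomposable, then $\det M=0$. The bridge to \cref{lem:intranstive-zero-determinant} is the observation that the rows of $M$ are exactly the left-multiplication maps $\psi_i\colon j\mapsto i\cdot j=m_{ij}$ of the underlying cycle set, each of which is a permutation of $U_n$ since $\psi_i$ is invertible in any cycle set.

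First I would pass from the integer matrix $M$ to a polynomial matrix. Let $\widetilde{M}$ be the $n\times n$ matrix over $R[x_1,\ldots,x_n]$ whose $(i,j)$ entry is $x_{m_{ij}}$. Because $\psi_i$ is a permutation, the $i$-th row of $\widetilde{M}$ is precisely the element $\psi_i\in\Sym_{x_1,\ldots,x_n}$ acting on the variables. Moreover the specialization homomorphism $R[x_1,\ldots,x_n]\to R$ sending $x_k\mapsto k$ carries $\widetilde{M}$ to $M$, and since determinants are preserved by ring homomorphisms it sends $\det\widetilde{M}$ to $\det M$. Thus it suffices to show $\det\widetilde{M}=0$ in $R[x_1,\ldots,x_n]$.

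Next I would translate decomposability into intransitivity of the group $G=\langle \psi_1,\ldots,\psi_n\rangle$ generated by the rows. Since the solution comes from a cycle set, its indecomposability is equivalent to transitivity of the permutation group $\mathcal{G}$ on $U_n$, and $G$ and $\mathcal{G}$ have the same orbits on $U_n$; a decomposition $U_n=X_1\sqcup X_2$ exhibits $X_1,X_2$ as proper $G$-invariant subsets, so $G$ acts intransitively. With this in hand, \cref{lem:intranstive-zero-determinant} applied to $\widetilde{M}$, whose rows all lie in the intransitive group $G\le\Sym_{x_1,\ldots,x_n}$, yields $\det\widetilde{M}=0$, and specializing $x_k\mapsto k$ gives $\det M=0$, establishing the contrapositive.

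The specialization step and the invocation of the lemma are routine. The point that requires care is the identification of the rows of $M$ with the generators of the permutation group and the precise equivalence between decomposability of the solution and intransitivity of $G$ on $U_n$, which is exactly where the cycle-set-to-solution dictionary must be used; I expect this bookkeeping, rather than any calculation, to be the main obstacle.
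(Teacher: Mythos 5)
Your proof is correct and follows essentially the same route as the paper: both argue the contrapositive by viewing the rows as the permutations $\psi_i$, invoking \cref{lem:intranstive-zero-determinant}, and specializing $x_k\mapsto k$ over $R=\mathbb{Z}$. The only difference is that you spell out the polynomial matrix $\widetilde{M}$, the specialization homomorphism, and the decomposability--intransitivity dictionary explicitly, details the paper compresses into a single sentence.
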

\begin{proof}
    Setting $x_i=i\in\mathbb{Z}$ and $R=\mathbb{Z}$ in \cref{lem:intranstive-zero-determinant}, we get that if the cycle matrix gives a decomposable solution, then the determinant must be zero. Hence the result follows by taking the contrapositive. 
\end{proof}
The converse of the \cref{cor:non-zero-indecomposable} is not true, as evident from the following example.
\begin{example}
We note down the following two matrices. The first one is an indecomposable cycle matrix, although it is of determinant $0$. The second one gives an indecomposable cycle matrix, illustrating the previous corollary.
 \begin{align*}
     \begin{vmatrix}
		4&7&2&1&6&5&8&3\\
		8&1&4&3&2&7&6&5\\
		4&3&2&5&6&1&8&7\\
		2&1&6&3&8&7&4&5\\
		2&5&4&3&8&7&6&1\\
		4&3&8&1&6&5&2&7\\
		2&1&4&7&8&3&6&5\\
		6&3&2&1&4&5&8&7
		\end{vmatrix}=0, 
  \begin{vmatrix}
	4&8&7&1&5&6&3&2\\
	7&1&4&8&3&2&5&6\\
	5&3&2&6&1&4&7&8\\
	2&6&5&3&7&8&1&4\\
	3&6&5&2&8&7&1&4\\
	1&8&7&4&6&5&3&2\\
	7&4&1&8&3&2&6&5\\
	5&2&3&6&1&4&8&7
	\end{vmatrix}=-147456.
 \end{align*}
\end{example}
The following lemma is well known, but we mention it here for completeness.
\begin{lemma}\label{lem:product-cycle-set}
    Let $(X,\cdot)$ and $(Y,\star)$ are two cycle sets, then $(X\times Y,\circ)$ is a cycle set, where $(x_1,y_1)\circ(x_2,y_2)=(x_1.x_2,y_1\star y_2).$ 
\end{lemma}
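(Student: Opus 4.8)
The plan is to verify directly the two defining properties of a cycle set for the operation $\circ$ on $X\times Y$, exploiting the fact that $\circ$ is defined coordinatewise, so that both axioms descend from the corresponding axioms in each factor.

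First I would check invertibility of the left-translation maps. For a fixed $(x_1,y_1)\in X\times Y$, the map $\psi_{(x_1,y_1)}\colon (x_2,y_2)\mapsto (x_1\cdot x_2,\ y_1\star y_2)$ is precisely the product map $\psi_{x_1}\times\psi_{y_1}$ on $X\times Y$. Since $(X,\cdot)$ and $(Y,\star)$ are cycle sets, $\psi_{x_1}$ and $\psi_{y_1}$ are both invertible, whence their product is invertible with inverse $\psi_{x_1}^{-1}\times\psi_{y_1}^{-1}$. This settles the first condition.

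Next I would verify the cycle-set identity. Fixing $(x_i,y_i)$ for $i=1,2,3$ and expanding the left-hand side $\big((x_1,y_1)\circ(x_2,y_2)\big)\circ\big((x_1,y_1)\circ(x_3,y_3)\big)$ by the definition of $\circ$ yields the coordinatewise expression $\big((x_1\cdot x_2)\cdot(x_1\cdot x_3),\ (y_1\star y_2)\star(y_1\star y_3)\big)$. Applying the cycle-set identity in $X$ to the first coordinate and in $Y$ to the second coordinate rewrites this as $\big((x_2\cdot x_1)\cdot(x_2\cdot x_3),\ (y_2\star y_1)\star(y_2\star y_3)\big)$, which is exactly the expansion of the right-hand side $\big((x_2,y_2)\circ(x_1,y_1)\big)\circ\big((x_2,y_2)\circ(x_3,y_3)\big)$. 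Hence the identity holds for $\circ$, and $(X\times Y,\circ)$ is a cycle set.

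Since both conditions are inherited coordinatewise, there is no genuine obstacle here: the entire argument is a componentwise bookkeeping of the two cycle-set axioms, which is precisely why the statement is folklore. The only points requiring a modicum of care are keeping the two operations $\cdot$ and $\star$ distinct throughout the computation and confirming that the inverse of a product of maps is the product of the inverses.
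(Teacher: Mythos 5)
Your proof is correct: both cycle-set axioms (invertibility of the left translations and the cycloid identity) are verified coordinatewise, which is exactly the standard argument. The paper itself states this lemma without proof, remarking only that it is well known, so there is nothing to compare beyond noting that your componentwise verification is the natural and complete justification.
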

The cycle set $(X\times Y,\circ)$ is said to be the product of cycle sets $(X,.)$ and $(X,\star)$.
We connect this with the concept of the tensor product of matrices. Recall that, for two matrices $A=(a_{ij})$ and $B=(b_{kl})$, the tensor matrix is a block matrix such that the $ij$-th block is given by
\begin{align*}
    \left(A\otimes B\right)_{ij}=\left(a_{ij}B\right).
\end{align*}
\begin{proposition}\label{prop:tensor-multiplication}
Consider two cycle matrices $A\in\cyc_m$ and $B\in\cyc_n$. If the corresponding cycle sets are given by $(U_m,\cdot)$ and $(U_n,\star)$ respectively, then the matrix $A\otimes B$ is a cycle matrix whose cycle set is isomorphic to $(U_m\times U_n,\circ)$. Hence $A\otimes B\in \cyc_{mn}$
\end{proposition}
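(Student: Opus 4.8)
The plan is to promote the product cycle set of \cref{lem:product-cycle-set} to a \emph{non-degenerate} one and then to read off its cycle matrix as the block matrix $A\otimes B$. The lemma already supplies the cycle set axioms for $(U_m\times U_n,\circ)$---including invertibility of each left translation, which one sees concretely from $(x,y)\circ(x',y')=(x\cdot x',\,y\star y')=\big(\psi_x^{\cdot}(x'),\psi_y^{\star}(y')\big)$, so that $\psi_{(x,y)}=\psi_x^{\cdot}\times\psi_y^{\star}$ is a product of bijections---so the only extra condition needed for a cycle matrix is non-degeneracy.

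For non-degeneracy, note that because $A\in\cyc_m$ and $B\in\cyc_n$ the squaring maps $\varphi^{\cdot}\colon x\mapsto x\cdot x$ and $\varphi^{\star}\colon y\mapsto y\star y$ are bijective. The squaring map of the product factors as $\varphi^{\circ}(x,y)=(x\cdot x,\,y\star y)=\big(\varphi^{\cdot}\times\varphi^{\star}\big)(x,y)$, a product of two bijections and hence a bijection. Thus $(U_m\times U_n,\circ)$ is a non-degenerate cycle set.

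It remains to present this cycle set as an $mn\times mn$ matrix and to identify that matrix with $A\otimes B$. I would fix the bijection $\iota\colon U_m\times U_n\to U_{mn}$, $\iota(i,k)=(i-1)n+k$, and transport $\circ$ along it; as cycle matrices are defined only up to isomorphism of cycle sets, the result is a bona fide element of $\cyc_{mn}$ encoding $(U_m\times U_n,\circ)$. With rows indexed by $\iota(i,k)$ and columns by $\iota(j,l)$, the entry at position $\big((i,k),(j,l)\big)$ is $\iota\big(i\cdot j,k\star l\big)=\iota\big(a_{ij},b_{kl}\big)$, so, grouped into $m\times m$ blocks of size $n\times n$, the $(i,j)$-block is a single copy of $B$ relabelled according to the value $a_{ij}$, matching the block $a_{ij}B$ in the definition of the tensor matrix. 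I expect the one genuinely delicate step to be this identification: checking that the chosen labelling makes $a_{ij}B$ coincide with the $(i,j)$-block arising from $\circ$, which is a careful matching of the two encodings of the pair $(a_{ij},b_{kl})$ rather than a conceptual difficulty. Once aligned, we conclude $A\otimes B\in\cyc_{mn}$ with cycle set isomorphic to $(U_m\times U_n,\circ)$.
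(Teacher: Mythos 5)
Your proof takes essentially the same route as the paper's: both transport the product cycle set of \cref{lem:product-cycle-set} along the bijection $(i,j)\mapsto(i-1)n+j$ and read off the resulting matrix as an element of $\cyc_{mn}$. You are in fact slightly more complete than the paper, since you verify non-degeneracy of the product explicitly (the paper leaves this implicit) and you correctly flag that the block ``$a_{ij}B$'' must be understood as the relabelling $b_{kl}\mapsto(a_{ij}-1)n+b_{kl}$ of $B$ rather than as a literal scalar multiple, which is exactly the point at which the statement's appeal to the standard tensor-product notation has to be interpreted loosely.
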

\begin{proof}
    Recall from \cref{lem:product-cycle-set}, the operation `$\circ$' on $U_m\times U_n$ is determined by $(a,b)\circ(c,d)=(a\cdot c,b\star d)$, for all $a,c\in U_m$ and $b,d\in U_n$.
    We need to relabel the elements $(i,j)\in U_m\times U_n$, to establish the result.
    Define $\varphi:U_m\times U_n\longrightarrow U_{mn}$ by
    \begin{align*}
        \varphi(i,j)=(i-1)n+j.
    \end{align*}
    Clearly, this function is bijective. Indeed $\varphi(i_1,j_1)=\varphi(i_2,j_2)$ implies that $(j_1-j_2)$ is divisible by $n$, which is not possible. Defining a binary operation `$\bullet$' on
    $U_{mn}$ as
    \begin{align*}
        x\bullet y=\varphi(\varphi^{-1}(x)\circ\varphi^{-1}(y)),
    \end{align*}
    concludes the proof.
\end{proof}

\begin{remark}
Chose any column say $C_j$ of a cycle matrix $(m_{ij})$. Then visit the columns $C_{m_{rj}}$ for all $r=1,2,\cdots,n$ and keep on doing the same process for each $C_{m_{rj}}$. The cycle matrix is indecomposable if and only if all columns can be traversed by this process.
Indeed this is the necessary and sufficient condition for the action of the permutation group $\mathcal{G}$ on $U_n$ to be transitive.
\end{remark}
\begin{corollary}\label{cor:product-indc-imply-indc}
If $(X\times Y,\circ)$ is indecomposable, then both $(X,\cdot)$ and $(X,\star)$ are indecomposable.
\end{corollary}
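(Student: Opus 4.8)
The plan is to prove the contrapositive and to lean on the characterization recorded above, that a cycle set is indecomposable precisely when its permutation group $\mathcal{G}$ acts transitively on the underlying set. The structural fact that drives everything is that the product operation is coordinatewise: since $(x_1,y_1)\circ(x_2,y_2)=(x_1\cdot x_2,\,y_1\star y_2)$, the left-translation maps factor as $\psi_{(x,y)}=\psi_x\times\psi_y$, and consequently so do all the derived maps $f_{(x,y)},g_{(x,y)}$ that assemble the associated solution. Hence the permutation group $\mathcal{G}_{X\times Y}$ of the product embeds into $\mathcal{G}_X\times\mathcal{G}_Y$ acting coordinatewise, and its two coordinate projections carry generators to generators, so $p_X(\mathcal{G}_{X\times Y})\le\mathcal{G}_X$ and $p_Y(\mathcal{G}_{X\times Y})\le\mathcal{G}_Y$ (in fact these projections are onto, since every $\psi_x$ arises from some $\psi_{(x,y)}$).

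Granting this, the argument is immediate. Assume $(X\times Y,\circ)$ is indecomposable, i.e.\ $\mathcal{G}_{X\times Y}$ is transitive on $X\times Y$. The first-coordinate projection $X\times Y\to X$ is surjective and equivariant for $p_X$, so the image group $p_X(\mathcal{G}_{X\times Y})$ acts transitively on $X$; being a subgroup of $\mathcal{G}_X$, it forces $\mathcal{G}_X$ to be transitive, and therefore $(X,\cdot)$ is indecomposable. Running the same argument through the second projection shows $(Y,\star)$ is indecomposable.

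I would also record the more concrete version of the same idea, which avoids group-theoretic language. If $(X,\cdot)$ were decomposable, with $X=X_1\sqcup X_2$ a partition into invariant sub-cycle-sets, then $X\times Y=(X_1\times Y)\sqcup(X_2\times Y)$ would decompose the product: $X_i\cdot X_i\subseteq X_i$ immediately gives $(X_i\times Y)\circ(X_i\times Y)\subseteq X_i\times Y$, and each block, with the restricted operation, is exactly the product cycle set $(X_i,\cdot)\times(Y,\star)$, hence a non-degenerate sub-cycle-set by \cref{lem:product-cycle-set}. The symmetric statement handles a decomposition of $(Y,\star)$, and taking contrapositives yields the corollary.

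The only step that needs genuine care is the bookkeeping behind $\mathcal{G}_{X\times Y}\le\mathcal{G}_X\times\mathcal{G}_Y$ with surjective coordinate projections: one must match the generators of the product's permutation group with those of the factors and check that restricting a product-form bijection of $X\times Y$ to an invariant block $X_i\times Y$ again yields a bijection and preserves non-degeneracy, so that the blocks are honest sub-solutions. Once the coordinatewise factorization of $\circ$ is in place, every remaining verification is formal.
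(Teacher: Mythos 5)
Your main argument is correct and is essentially the paper's own proof: the paper establishes the inclusion $\orb_{\mathcal{G}(X\times Y)}(x,y)\subseteq\orb_{\mathcal{G}(X)}(x)\times\orb_{\mathcal{G}(Y)}(y)$ using exactly the coordinatewise factorization $\psi_{(x,y)}=\psi_x\times\psi_y$ that you invoke, and then reads off transitivity of each factor from transitivity of the product. Your secondary, contrapositive argument via the decomposition $X\times Y=(X_1\times Y)\sqcup(X_2\times Y)$ is a valid and slightly more elementary alternative, but it does not change the substance.
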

\begin{proof}
    This is clear from the following inequality
        \begin{align*}
            \orb_{\mathcal{G}(X\times Y)}(x,y)\subset \orb_{\mathcal{G}(X)}(x)\times \orb_{\mathcal{G}(Y)}(y)
        \end{align*}
    for all $(x,y)\in X\times Y$. For the inequality, observe that
    \begin{align*}
        &\orb_{\mathcal{G}(X\times Y)}(x,y)\\
        =&\left\{ \psi^{r_{11}}_{(x_1,y_2)}\psi^{r_{12}}_{(x_1,y_2)}\cdots \psi^{r_{1m}}_{(x_1,y_m)} \psi^{r_{21}}_{(x_2,y_1)}\cdots \psi^{r_{2m}}_{(x_2,y_m)}\cdots \psi^{r_{nm}}_{(x_n,y_m)}(x,y)~\vert~r_{i,j}\in \mathbb{Z}\right\}\\
        =&\left\{\left(\psi^{\sum\limits_{i}^mr_{1i}}_{x_1}\psi^{\sum\limits_{i}^mr_{2i}}_{x_2}\cdots \psi^{\sum\limits_{i}^mr_{ni}}_{x_n}(x),~\psi^{r_{11}}_{y_1}\cdots \psi^{r_{1m}}_{y_m}\cdots \psi^{r_{n1}}_{y_1}\cdots \psi^{r_{nm}}_{y_m}(y)\right)
    \right\}\\
    \subset&\orb_{\mathcal{G}(X)}(x)\times \orb_{\mathcal{G}(Y)}(y)
    \end{align*}
\end{proof}
Note that, the converse of the statement does not hold true. 
We provide the following example exhibiting this scenario. Consider the following two matrices:
\begin{align*}
    \begin{blockarray}{cccc}
& 1 & 2 & 3 \\
\begin{block}{c[ccc]}
  1 & 2 & 3 & 1 \\
  2 & 2 & 3 & 1\\
  3 & 2 & 3 & 1\\
\end{block}
\end{blockarray}, \begin{blockarray}{cccc}
& 1 & 2 & 3 \\
\begin{block}{c[ccc]}
  1 & 3 & 1 & 2 \\
  2 & 3 & 1 & 2\\
  3 & 3 & 1 & 2\\
\end{block}
\end{blockarray}.
\end{align*}
From the tensor matrix, it can be seen that $\orb(1,1)=\{(1,1),~(2,3),~(3,2)\}$, which indicates that the product solution is not an indecomposable solution.

Now we record a special kind of cycle matrix, which may be of independent interest. This has a resemblance with Latin squares as was pointed out by L Vendramin. We start with a definition.
\begin{definition}
    A cycle matrix $M$ will be called a \emph{transpose cycle matrix} if the transpose matrix $M^t$ is also a cycle matrix.
\end{definition}
Note that if an $n\times n$ matrix $M=(m_{ij})$ is a transpose cycle matrix, we get that $(X_n,\cdot)$ is also a cycle set where $i\cdot j= m_{ji}$. 
We exhibit one example here.
\begin{example}
For $n=4$, we have the following two examples which give two non-isomorphic square-free cycle sets. The matrices are:
\begin{align*}
    \begin{bmatrix}
    1 & 4 & 3 & 2 \\
   2 & 3 & 4 & 1\\
   4 & 1 & 2 & 3\\
   3 & 2 & 1 & 4
   \end{bmatrix},
   \begin{bmatrix}
    4 & 2 & 3 & 1 \\
   3 & 1 & 4 & 2\\
   1 & 3 & 2 & 4\\
   2 & 4 & 1 & 3    
   \end{bmatrix}.
\end{align*}
\end{example}
\begin{definition}[Transpose cycle set ]
A cycle set $(X,\cdot)$ is said to be a \emph{transpose cycle set} if $(X,\star)$ is a cycle set, where $x\star y=y\cdot x.$
\end{definition}
\begin{lemma}\label{lem:transpose-cycle}
    A cycle set $(X,\cdot)$ is a transpose cycle set if and only if
    \begin{enumerate}
        \item for all $y\in X$, the map $x\mapsto x\cdot y$ is bijective,
    \item for all $x,~y,~z\in X$ we have$(z\cdot x)\cdot (y\cdot x)=(z\cdot y)\cdot (x\cdot y)$.
    \end{enumerate}
\end{lemma}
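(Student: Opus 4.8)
The plan is to unwind the definition of a transpose cycle set directly. By definition, $(X,\cdot)$ is a transpose cycle set precisely when the opposite operation $x\star y:=y\cdot x$ makes $(X,\star)$ a cycle set. A cycle set structure on $(X,\star)$ demands exactly two things: first, that each left translation $\psi^\star_x\colon y\mapsto x\star y$ be bijective; and second, that the cycle set identity $(x\star y)\star(x\star z)=(y\star x)\star(y\star z)$ hold for all $x,y,z$. So the whole argument reduces to translating each of these two requirements back into the language of the original operation $\cdot$ and matching them with conditions (1) and (2).

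First I would handle the bijectivity requirement. Since $x\star y=y\cdot x$, the map $\psi^\star_x$ sends $y$ to $y\cdot x$; that is, it is right translation by $x$ in the original cycle set. Requiring $\psi^\star_x$ to be bijective for every $x$ is therefore the same as requiring that $y\mapsto y\cdot x$ be bijective for every $x$, which after renaming the fixed argument is verbatim condition (1). It is worth noting that this is a genuinely new demand: a cycle set only guarantees bijectivity of $\psi_x\colon y\mapsto x\cdot y$, not of right translation, so condition (1) is not automatic.

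Next I would translate the cycle set identity for $\star$. Substituting $a\star b=b\cdot a$ termwise gives $x\star y=y\cdot x$, $\ x\star z=z\cdot x$, $\ y\star x=x\cdot y$, and $y\star z=z\cdot y$; feeding these into $(x\star y)\star(x\star z)=(y\star x)\star(y\star z)$ and applying the same rule once more collapses the left side to $(z\cdot x)\cdot(y\cdot x)$ and the right side to $(z\cdot y)\cdot(x\cdot y)$. Equating the two is exactly condition (2). Because each translation is an equivalence, reading it forward yields the ``only if'' direction and reading it backward yields the ``if'' direction, so the lemma follows once both translations are recorded.

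The argument is entirely a matter of careful bookkeeping, so I do not anticipate a genuine obstacle; the one place to stay vigilant is the substitution step, where the roles of the left and right arguments interchange under $\star$, and a single misplaced factor would send condition (2) to a superficially similar but incorrect identity. I would therefore write the four termwise substitutions out explicitly rather than trust them to be obvious.
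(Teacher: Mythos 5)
Your proposal is correct and matches the paper's intent exactly: the paper dismisses this lemma as ``obvious from the definition of cycle set,'' and your direct unwinding of the opposite operation $x\star y=y\cdot x$ into the two cycle-set axioms is precisely that omitted verification, with the substitution in the cycloid identity carried out correctly. Nothing to add.
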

The proof of \cref{lem:transpose-cycle} is obvious, from the definition of cycle set. We end this section by mentioning the following lemma, which proves the existence of an infinite family of
transpose cycle sets.
\begin{proposition}\label{prop:infinite-transpose-cycle}
The product of two transposed cycle sets is a transposed cycle set.
\end{proposition}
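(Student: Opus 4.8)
The plan is to reduce the statement to the characterization of transpose cycle sets recorded in \cref{lem:transpose-cycle}. Let $(X,\cdot)$ and $(Y,\star)$ be two transpose cycle sets, and write $(X\times Y,\circ)$ for their product, so that $(x_1,y_1)\circ(x_2,y_2)=(x_1\cdot x_2,\,y_1\star y_2)$. By \cref{lem:product-cycle-set} the product $(X\times Y,\circ)$ is already a cycle set; consequently it remains only to verify the two extra conditions of \cref{lem:transpose-cycle} for $\circ$, namely that right multiplication is bijective and that the identity $(z\cdot x)\cdot(y\cdot x)=(z\cdot y)\cdot(x\cdot y)$ holds (with $\circ$ in place of $\cdot$).

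First I would verify the bijectivity condition. Fixing $(y_1,y_2)\in X\times Y$, the map $(x_1,x_2)\mapsto(x_1,x_2)\circ(y_1,y_2)=(x_1\cdot y_1,\,x_2\star y_2)$ is the Cartesian product of the two coordinate maps $x_1\mapsto x_1\cdot y_1$ on $X$ and $x_2\mapsto x_2\star y_2$ on $Y$. Since $(X,\cdot)$ and $(Y,\star)$ are transpose cycle sets, each of these coordinate maps is bijective by part (1) of \cref{lem:transpose-cycle}, and hence so is their product. This establishes the first condition.

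Next I would check the transposed cycle identity for $\circ$. Writing $z=(z_1,z_2)$, $x=(x_1,x_2)$, $y=(y_1,y_2)$ and expanding using the definition of $\circ$, the left-hand side is $\bigl((z_1\cdot x_1)\cdot(y_1\cdot x_1),\,(z_2\star x_2)\star(y_2\star x_2)\bigr)$ and the right-hand side is $\bigl((z_1\cdot y_1)\cdot(x_1\cdot y_1),\,(z_2\star y_2)\star(x_2\star y_2)\bigr)$. The key observation is that the identity decouples across the two coordinates: equality in the first coordinate is exactly part (2) of \cref{lem:transpose-cycle} for $(X,\cdot)$, and equality in the second coordinate is part (2) for $(Y,\star)$. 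Both hold by hypothesis, so the desired identity holds for $\circ$, and the two conditions together show that $(X\times Y,\circ)$ is a transpose cycle set.

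I do not expect a genuine obstacle here: the entire argument rests on the fact that both defining conditions of a transpose cycle set are \emph{coordinatewise}, so they are inherited by products in exactly the same way the cycle-set axioms are in \cref{lem:product-cycle-set}. The only care needed is to expand the nested products correctly and to match each coordinate against the correct instance of \cref{lem:transpose-cycle}; once the two sides are written out, the verification is immediate.
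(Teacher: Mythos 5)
Your proof is correct and follows essentially the same route as the paper's: both verify the two conditions of \cref{lem:transpose-cycle} for the product, observing that right multiplication and the transposed cycloid identity each decouple coordinatewise. The only cosmetic difference is that the paper deduces bijectivity of right multiplication from surjectivity (using finiteness), while you invoke bijectivity of each coordinate map directly.
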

\begin{proof}
Let $(x',y'),~(x,y)\in X\times Y,$ then there is $(x_1,y_1)\in X\times Y$ such that $x_1\cdot x=x'$ and $(y_1\cdot y=y').$ Therefore $(x_1,y_1)\to (x_1,y_1)\circ (x,y)$ is surjective, hence it is bijective. Let $(x_1,y_1),~(x_2,y_2)$ and $(x_3,y_3)\in X\times Y$. Then we have,
\begin{align*}
&\big(((x_1,y_1)\circ (x_2,y_2)\big)\circ \big(((x_3,y_3)\circ (x_2,y_2)\big)\\
=&(x_1\cdot x_2,y_1\cdot y_2)\circ (x_3\cdot x_2,y_3\cdot y_2)\\
=&\big((x_1\cdot x_2)\cdot (x_3\cdot x_2),~(y_1\cdot y_2)\cdot (y_3\cdot y_2)\big)\\
=&\big((x_1\cdot x_3)\cdot (x_2\cdot x_3),(y_1\cdot y_3)\cdot (y_2\cdot y_3)\big)
\\
=&\big((x_1,y_1)\circ (x_3,y_3)\big)\circ \big((x_2,y_2)\circ (x_3,y_3)\big).
\end{align*}
This finishes the proof.
\end{proof}
\begin{remark}
    All transpose cycle sets are irretractable. Also, each column of a transpose cycle matrix is also a permutation, i.e, $x_i\mapsto \psi_{x_i}(y)$ is a permutation for all $y.$
\end{remark}

\section{An action of the symmetric group on set of cycle matrices}\label{sec:sn-action}
In this section, we define an action of the symmetric group $\Sym_n$ on $\cyc_n$ and compute the orbit of a given cycle matrix. We start with an example. Consider two isomorphic cycle matrices given by \begin{align*}
 A=
\begin{blockarray}{cccc}
& 1 & 2 & 3 \\
\begin{block}{c[ccc]}
  1 & 2 & 3 & 1 \\
  2 & 2 & 3 & 1\\
  3 & 2 & 3 & 1\\
\end{block}
\end{blockarray}
 \text{ and }B=
\begin{blockarray}{cccc}
& 1 & 2 & 3 \\
\begin{block}{c[ccc]}
  1 & 3 & 1 & 2 \\
  2 & 3 & 1 & 2\\
  3 & 3 & 1 & 2\\
\end{block}
\end{blockarray}.
\end{align*}
Take the element $\sigma=(1,2)\in\Sym_3.$ Transformation of $A$ to $B$ can be observed as follows.
 \begin{align*}
     A\xrightarrow{\sigma(A)}&
\begin{blockarray}{cccc}
& \sigma(1) & \sigma(2) & \sigma(3) \\
\begin{block}{c[ccc]}
  \sigma(1) & \sigma(2) & \sigma(3) & \sigma(1) \\
  \sigma(2) & \sigma(2) & \sigma(3) & \sigma(1)\\
  \sigma(3) & \sigma(2) & \sigma(3) & \sigma(1)\\
\end{block}
\end{blockarray}
= \begin{blockarray}{cccc}
& 2 & 1 & 3 \\
\begin{block}{c[ccc]}
  2 & 1 & 3 & 2 \\
  1 & 1 & 3 & 2\\
  3 & 1 & 3 & 2\\
\end{block}
\end{blockarray}\\
 \xrightarrow{\sigma^{-1}(R)}& \begin{blockarray}{cccc}
& 2 & 1 & 3 \\
\begin{block}{c[ccc]}
  1 & 1 & 3 & 2 \\
  2 & 1 & 3 & 2\\
  3 & 1 & 3 & 2\\
\end{block}
\end{blockarray}
\xrightarrow{\sigma^{-1}(C)} \begin{blockarray}{cccc}
& 1 & 2 & 3 \\
\begin{block}{c[ccc]}
  1 & 3 & 1 & 2 \\
  2 & 3 & 1 & 2\\
  3 & 3 & 1 & 2\\
\end{block}
\end{blockarray}=B,
 \end{align*}
 where $\sigma^{-1}{R}$ means action of $\sigma^{-1}$ on rows, i.e, $R_i\to R_{\sigma^{-1}(i)}$, similarly $\sigma^{-1}(C)$ for columns.
In general, if $A$ and $B$ are two isomorphic solutions and $\sigma~:~A\to B$ is an isomorphism then the action can be seen to be
$A\xrightarrow{\sigma(A)}A_1\xrightarrow{\sigma^{-1}(R)}A_2\xrightarrow{\sigma^{-1}(C)}B$.
Conversely, if $A$ and $B$ are two matrices corresponding to two solutions $(X,\cdot)$ and $(X,\star)$ and there is a permutation $\sigma$ on $X$ such that $B$ can be obtained from $A$ by the above operations, then $(X,\cdot)$ and $(X,\star)$ are isomorphic and $\sigma$ is an isomorphism. The following lemma is the generalized picture. This is crucial for defining the action. The below lemma is essentially the \cite[Lemma 2.5]{AkMeVe22}.
\begin{lemma}\label{lem:action-of-Sn}
    Let $(X,\cdot)$ be a cycle set and $\sigma$ is a permutation on $X$.
    Then $(X,\star)$ is a cycle set, where $x\star y=\sigma(\sigma^{-1}(x)\cdot\sigma^{-1}(y))$.
    Also $\sigma~:~(X,\cdot)\to (X,\star)$ is an isomorphism. 
    Conversely, all cycle sets isomorphic to $(X,\cdot)$ defined on $X$ come like that.
    In particular, $\sigma$ is an automorphism of $(X,\cdot)$ if and only if the cycle matrices of $(X,\cdot)$ and $(X,\star)$ are the same with $X=\{1,2,\cdots,n\}$.
\end{lemma}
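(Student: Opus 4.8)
The plan is to verify the four assertions in order, each by a direct computation that transports the axioms of $(X,\cdot)$ across the bijection $\sigma$. Throughout I would write $a=\sigma^{-1}(x)$, $b=\sigma^{-1}(y)$, $c=\sigma^{-1}(z)$, so that by definition $x\star y=\sigma(a\cdot b)$ and hence $\sigma^{-1}(x\star y)=a\cdot b$. This bookkeeping reduces every identity below to a one-line consequence of the corresponding identity for $(X,\cdot)$.

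First I would check that $(X,\star)$ is a cycle set. For invertibility of left translation, observe that the map $y\mapsto x\star y$ equals the composite $\sigma\circ\psi_{\sigma^{-1}(x)}\circ\sigma^{-1}$, which is a bijection since $\sigma$ is a permutation and $\psi_{\sigma^{-1}(x)}$ is invertible by the cycle set hypothesis on $(X,\cdot)$. For the cycle identity, the substitution above gives
\[
(x\star y)\star(x\star z)=\sigma\bigl((a\cdot b)\cdot(a\cdot c)\bigr),\qquad (y\star x)\star(y\star z)=\sigma\bigl((b\cdot a)\cdot(b\cdot c)\bigr),
\]
and the two right-hand sides agree by applying $\sigma$ to the cycle identity $(a\cdot b)\cdot(a\cdot c)=(b\cdot a)\cdot(b\cdot c)$ for $(X,\cdot)$. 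If one also wants non-degeneracy, it transfers for free, since the squaring map of $(X,\star)$ is $\sigma\circ\varphi\circ\sigma^{-1}$, a bijection whenever $\varphi\colon x\mapsto x\cdot x$ is. The isomorphism claim is then immediate: $\sigma(x)\star\sigma(y)=\sigma\bigl(\sigma^{-1}(\sigma(x))\cdot\sigma^{-1}(\sigma(y))\bigr)=\sigma(x\cdot y)$, so $\sigma$ is a homomorphism, and being bijective it is an isomorphism.

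For the converse, I would start from an arbitrary cycle set $(X,\star')$ defined on the same underlying set $X$ together with an isomorphism $\tau\colon(X,\cdot)\to(X,\star')$. Since both structures live on $X$, the bijection $\tau$ is a permutation of $X$, and the homomorphism property $\tau(u\cdot v)=\tau(u)\star'\tau(v)$ lets me rewrite, for $\sigma:=\tau$,
\[
x\star y=\sigma\bigl(\sigma^{-1}(x)\cdot\sigma^{-1}(y)\bigr)=\sigma\bigl(\sigma^{-1}(x)\bigr)\star'\sigma\bigl(\sigma^{-1}(y)\bigr)=x\star' y .
\]
Hence $\star'=\star$, so every cycle set on $X$ isomorphic to $(X,\cdot)$ is realized by the displayed recipe, with $\sigma$ taken to be the witnessing isomorphism. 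Finally, for the automorphism statement I would first translate ``same cycle matrix'' into the pointwise equality $i\star j=i\cdot j$ of the two operations on $X=\{1,2,\dots,n\}$, since the $(i,j)$ entry of a cycle matrix is exactly $i\cdot j$. If $\sigma\in\Aut(X,\cdot)$ then $x\star y=\sigma(\sigma^{-1}(x)\cdot\sigma^{-1}(y))=\sigma(\sigma^{-1}(x))\cdot\sigma(\sigma^{-1}(y))=x\cdot y$, so the matrices coincide; conversely, if $\star=\cdot$, then substituting $x=\sigma(a)$ and $y=\sigma(b)$ into $x\star y=x\cdot y$ yields $\sigma(a\cdot b)=\sigma(a)\cdot\sigma(b)$, so $\sigma$ is an automorphism.

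There is no deep obstacle here; the content is entirely the transport of axioms along a bijection. The one point that needs genuine care is conceptual rather than computational: in the converse I must keep separate the notions of two cycle sets being \emph{abstractly isomorphic} and of two cycle set structures on a \emph{fixed} underlying set being \emph{equal}. The recipe produces a specific operation $\star$ on $X$, and the claim is not that $(X,\cdot)$ and $(X,\star)$ are merely isomorphic (which is automatic) but that every operation on $X$ isomorphic to $\cdot$ \emph{equals} some such $\star$; the step that makes this precise is the identification of the witnessing isomorphism $\tau$ with the acting permutation $\sigma$. The same distinction underlies the ``in particular'' clause, where equality of the two cycle matrices must be read as equality of the operations $\star$ and $\cdot$, and not merely as an isomorphism of the associated cycle sets.
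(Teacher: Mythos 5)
Your proof is correct and follows essentially the same route as the paper: the same substitution $x,y\mapsto\sigma(x),\sigma(y)$ gives the isomorphism, the converse identifies the witnessing isomorphism with $\sigma$, and the remaining claims are transport of structure along $\sigma$. You simply spell out the verifications (bijectivity of $y\mapsto x\star y$, the cycle identity, and the ``in particular'' clause) that the paper dismisses as ``easy to check'' and ``clear''.
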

\begin{proof}
    It is easy to check that $(X,\star)$ is a cycle set. We have the relation $x\star y=\sigma(\sigma^{-1}(x)\cdot\sigma^{-1}(y)).$ By replacing $x,~y$ by $\sigma(x),~\sigma(y)$ we obtain $\sigma(x)\star\sigma( y)=\sigma(x\cdot y).$ Hence $\sigma$ is an isomorphism between $(X,\cdot)$ and $(X,\star).$
    
    Conversely assume $(X,\star)$ is isomporphic to $(X,\cdot)$ and $\sigma$ be an isomorphism from $(X,\cdot)$ to $(X,\star)$. Then $\sigma(x)\star \sigma(y)=\sigma(x\cdot y).$ By replacing $x,~y$ by $\sigma^{-1}(x),~\sigma^{-1}(y)$, we get our desired result. The last statement is clear.
\end{proof}
\begin{theorem}\label{th:cycle matrices-solution}
There is a bijective correspondence between the set of orbits $\cyc_n/\Sym_n$ and set-theoretic non-degenerate involutive solutions of QYBE of order $n$.    
\end{theorem}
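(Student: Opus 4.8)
The plan is to factor the correspondence through non-degenerate cycle sets, combining the two tools already available: Rump's bijection between non-degenerate cycle sets and SYBEs \cite[Proposition~1]{Ru05}, and the identification of isomorphism classes supplied by \cref{lem:action-of-Sn}. Write $\mathcal{S}_n$ for the set of isomorphism classes of SYBEs of order $n$. First I would make the $\Sym_n$-action explicit: for $\sigma\in\Sym_n$ and $M=(m_{ij})\in\cyc_n$ set $(\sigma\cdot M)_{ij}=\sigma\!\left(m_{\sigma^{-1}(i),\sigma^{-1}(j)}\right)$. A one-line computation shows $\tau\cdot(\sigma\cdot M)=(\tau\sigma)\cdot M$, so this is a genuine left action, and by \cref{lem:action-of-Sn} the matrix $\sigma\cdot M$ is precisely the cycle matrix of $(U_n,\star)$ with $x\star y=\sigma(\sigma^{-1}(x)\cdot\sigma^{-1}(y))$; in particular $\sigma\cdot M$ is again a cycle matrix, so the action really lands in $\cyc_n$.

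Next I would define $\Phi\colon\cyc_n/\Sym_n\to\mathcal{S}_n$ by sending the orbit of $M$ to the class of the solution attached, via Rump's correspondence, to the cycle set $(U_n,\cdot_M)$. For well-definedness, if $M'=\sigma\cdot M$ then \cref{lem:action-of-Sn} makes $\sigma$ an isomorphism $(U_n,\cdot_M)\to(U_n,\cdot_{M'})$ of cycle sets, and Rump's correspondence sends isomorphic cycle sets to isomorphic solutions, so the two classes coincide. Surjectivity is immediate: given any SYBE $(X,R)$ of order $n$, pass to its non-degenerate cycle set, transport the structure along any bijection $X\to U_n$, and read off a cycle matrix $M$ with $\Phi([M])=[(X,R)]$.

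The remaining point, and the one I expect to be the main obstacle, is injectivity. Suppose $\Phi([M])=\Phi([M'])$, so the solutions coming from $(U_n,\cdot_M)$ and $(U_n,\cdot_{M'})$ are isomorphic; I must upgrade this solution isomorphism to a cycle-set isomorphism. The crux is therefore to verify that Rump's bijection is compatible with morphisms in both directions---that a bijection is an isomorphism of solutions exactly when it is an isomorphism of the associated cycle sets. This follows from the explicit formulas linking $R$ with the maps $\psi_x$, but it is the only genuinely non-formal ingredient, and I would isolate it as a short functoriality lemma before the proof. Granting it, the isomorphism of solutions is a bijection $\sigma\in\Sym_n$ with $\sigma\colon(U_n,\cdot_M)\to(U_n,\cdot_{M'})$ an isomorphism of cycle sets, and the converse half of \cref{lem:action-of-Sn} then gives $M'=\sigma\cdot M$, i.e. $[M]=[M']$. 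Hence $\Phi$ is a bijection.
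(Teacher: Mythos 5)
Your proposal is correct and follows essentially the same route as the paper: factor the correspondence through isomorphism classes of non-degenerate cycle sets via \cref{lem:action-of-Sn} and then invoke Rump's bijection with SYBEs. You simply spell out the details (well-definedness, surjectivity, and the functoriality of Rump's correspondence needed for injectivity) that the paper's two-line proof leaves implicit.
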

\begin{proof}
    Note that using \cref{lem:action-of-Sn}, we get that there is a bijection between $\cyc_n/\Sym_n$ and the set of all non-isomorphic cycle sets of order $n$. The assertion follows from the bijection between non-isomorphic cycle sets and set-theoretic non-degenerate involutive solutions of QYBE of order $n$.
\end{proof}

Consider the following matrices:
\begin{align*}
    A= \begin{array}{c|cccccc}
    & 1 & 2 & 3 & 4 & 5 & 6 \\
    \hline
    1 & 1 & 2 & 3 & 5 & 4 & 6\\
    2 & 1 & 2 & 6 & 5 & 4 & 3\\
    3 & 4 & 5 & 3 & 1 & 2 & 6\\
    4 & 2 & 1 & 3 & 4 & 5 & 6\\
    5 & 2 & 1 & 6 & 4 & 5 & 3\\
    6 & 4 & 5 & 3 & 1 & 2 & 6
    \end{array},~B =
    \begin{array}{c|cccccc}
      & 3 & 4 & 5 & 2 & 1 & 6 \\
    \hline
    3 & 3 & 4 & 5 & 1 & 2 & 6\\
    4 & 3 & 4 & 6 & 1 & 2 & 5\\
    5 & 2 & 1 & 5 & 3 & 4 & 6\\
    2 & 4 & 3 & 5 & 2 & 1 & 6\\
    1 & 4 & 3 & 6 & 2 & 1 & 5\\
    6 & 2 & 1 & 5 & 3 & 4 & 6
    \end{array},~C =
    \begin{array}{c|cccccc}
      & 1 & 2 & 3 & 4 & 5 & 6 \\
    \hline
    1 & 1 & 2 & 4 & 3 & 6 & 5 \\
    2 & 1 & 2 & 4 & 3 & 5 & 6 \\
    3 & 2 & 1 & 3 & 4 & 5 & 6 \\
    4 & 2 & 1 & 3 & 4 & 6 & 5 \\
    5 & 4 & 3 & 2 & 1 & 5 & 6 \\
    6 & 4 & 3 & 2 & 1 & 5 & 6
    \end{array}.
\end{align*}
Note that $A$ represents a cycle matrix. Considering $\sigma= (1,3,5)(2,4)$ and applying to each entry of $A$, we get the matrix $B$. After rearranging we get the matrix $C$, which is again a cycle matrix. Hence 
for any permutation of $\{1,2,\ldots, n\}$ and a cycle matrix, we get another cycle matrix.
Generalizing this we define the following. 
\begin{definition}[Symmetric group action on the set of cycle matrices]\label{defn:symmetric-group-action} Consider $\M$ to be a cycle matrix and $(X,\star)$ to be the solution associated with it, i.e. $\M_{ij}=i\star j$. For an element $\sigma\in \Sym_n$ define
\begin{align*}
    \left(\sigma\M\right)_{ij}=\sigma(\sigma^{-1}(i)\star\sigma^{-1}(j)).
\end{align*}
\end{definition}
Note that the above definition of action is well-defined because of \cref{lem:action-of-Sn}. Clearly, we have that $e \M=\M$ for the identity element $e\in\Sym_n$.
For two elements $\alpha,\beta\in\Sym_n$, we have that
\begin{align*}
    \left((\alpha\beta)\M\right)_{ij}
    &=\alpha\beta(\beta^{-1}(\alpha^{-1}(i))\star\beta^{-1}(\alpha^{-1}(j)))\\
    \text{and }\left(\alpha\left(\beta\M\right)\right)_{ij}&=\alpha\left(\left(\beta\M\right)_{\alpha^{-1}(i)\alpha^{-1}(j)}\right)\\
    &=\alpha\beta(\beta^{-1}(\alpha^{-1}(i))\star\beta^{-1}(\alpha^{-1}(j))),
\end{align*}
which clearly proves that it is an action. Using \cref{lem:action-of-Sn} and \cref{defn:symmetric-group-action}, the following result is immediate. We note it down here for further reference.
\begin{lemma}\label{lem:orbit-stabilizer-of-action}Let $|X|=n$. Then two cycle matrices (equivalently their solutions) are isomorphic if and only if they lie in the same orbit of the $\Sym_n$ action. 
Also, $\sigma\in\Sym_n$ is an automorphism of $(X,\cdot)$ if and only if it stabilizes the cycle matrix of $(X,\cdot)$.
\end{lemma}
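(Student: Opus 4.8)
The plan is to observe that \cref{defn:symmetric-group-action} is precisely the matrix-level translation of the twisting construction in \cref{lem:action-of-Sn}, so that both equivalences reduce to a dictionary between the language of cycle sets and the language of cycle matrices. Concretely, if $\M$ is the cycle matrix of a cycle set $(X,\cdot)$ with $X=U_n$, then for $\sigma\in\Sym_n$ the matrix $\sigma\M$, whose $(i,j)$ entry is $\sigma(\sigma^{-1}(i)\cdot\sigma^{-1}(j))$, is by definition exactly the cycle matrix of the cycle set $(X,\star)$ in which $x\star y=\sigma(\sigma^{-1}(x)\cdot\sigma^{-1}(y))$. I would record this identification as the first step, since every subsequent claim follows by reading \cref{lem:action-of-Sn} through it.

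For the first equivalence I would argue both directions. If $\N=\sigma\M$ for some $\sigma$, then $\N$ is the cycle matrix of the twisted set $(X,\star)$ above, and \cref{lem:action-of-Sn} tells us that $\sigma\colon(X,\cdot)\to(X,\star)$ is an isomorphism of cycle sets; hence the two matrices, equivalently their solutions, are isomorphic. Conversely, suppose $\M$ and $\N$ are isomorphic, say via an isomorphism $\sigma$ from the cycle set of $\M$ to that of $\N$. Here I would invoke the converse clause of \cref{lem:action-of-Sn}, which asserts that every cycle set on $X$ isomorphic to $(X,\cdot)$ arises as such a twist; thus the cycle set underlying $\N$ must equal $(X,\star)$ for this very $\sigma$, and comparing matrices entrywise yields $\N=\sigma\M$, so the two matrices lie in a single $\Sym_n$-orbit.

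For the second equivalence I would specialize the same identification. By the final sentence of \cref{lem:action-of-Sn}, a permutation $\sigma$ is an automorphism of $(X,\cdot)$ exactly when the cycle matrices of $(X,\cdot)$ and of the twist $(X,\star)$ coincide; since the former matrix is $\M$ and the latter is $\sigma\M$, this condition reads precisely $\sigma\M=\M$, that is, $\sigma$ stabilizes $\M$ under the action. No further computation is needed once the identification of $\sigma\M$ with the matrix of $(X,\star)$ is in place.

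The single point requiring care, and the only place where the argument is more than a formality, is the converse direction of the first equivalence: one must genuinely use that \cref{lem:action-of-Sn} classifies \emph{all} cycle sets on $X$ isomorphic to $(X,\cdot)$ as twists by a permutation, so that an abstract isomorphism between the two solutions is forced to be realized by the matrix action rather than merely certifying the existence of some isomorphic matrix elsewhere in $\cyc_n$. Once that classification is granted, both assertions are immediate.
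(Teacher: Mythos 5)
Your argument is correct and is exactly the route the paper takes: the paper states that the lemma is immediate from \cref{lem:action-of-Sn} together with \cref{defn:symmetric-group-action}, and your write-up simply makes explicit the identification of $\sigma\M$ with the cycle matrix of the twisted cycle set $(X,\star)$ and then reads off both equivalences (including the converse clause for the orbit statement and the final sentence of \cref{lem:action-of-Sn} for the stabilizer statement). Nothing is missing; you have only spelled out details the paper leaves implicit.
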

\begin{lemma}\label{lem:cycle-decomposition}
    If two cycle matrices give isomorphic solutions, then they have the same number of rows with the same cycle decomposition.
\end{lemma}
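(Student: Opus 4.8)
The plan is to reduce the statement to the $\Sym_n$-action introduced in \cref{defn:symmetric-group-action}. By \cref{lem:orbit-stabilizer-of-action}, two cycle matrices give isomorphic solutions precisely when they lie in the same orbit of this action; in particular both are $n\times n$, so they have the same number of rows. It therefore suffices to show that for a single $\sigma\in\Sym_n$ and a cycle matrix $\M$, the matrix $\sigma\M$ has the same multiset of row cycle types as $\M$. Iterating over a product of transpositions (or simply over the whole orbit) then yields the claim for any pair of isomorphic cycle matrices.

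The key step is to identify the $i$-th row of $\M$ with the permutation $\psi_i$ defined by $\psi_i(j)=i\star j=\M_{ij}$; this is a genuine permutation because $\M$ arises from a non-degenerate cycle set. I would then compute the rows of $\sigma\M$ directly from the definition of the action: the $i$-th row of $\sigma\M$ sends $j$ to
\[
(\sigma\M)_{ij}=\sigma\bigl(\sigma^{-1}(i)\star\sigma^{-1}(j)\bigr)=\sigma\bigl(\psi_{\sigma^{-1}(i)}(\sigma^{-1}(j))\bigr).
\]
In other words, as an element of $\Sym_n$ the $i$-th row of $\sigma\M$ equals $\sigma\circ\psi_{\sigma^{-1}(i)}\circ\sigma^{-1}$, the conjugate by $\sigma$ of the $\sigma^{-1}(i)$-th row of $\M$.

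From here the conclusion is immediate. Conjugation in $\Sym_n$ preserves cycle type, so for each $i$ the $i$-th row of $\sigma\M$ has the same cycle decomposition as the $\sigma^{-1}(i)$-th row of $\M$. Since $i\mapsto\sigma^{-1}(i)$ is a bijection of $U_n$, the rows of $\sigma\M$ are exactly the rows of $\M$, each replaced by a $\sigma$-conjugate and read off in a permuted order; hence the multiset of their cycle types is unchanged. In particular the number of rows having any prescribed cycle decomposition agrees for $\M$ and $\sigma\M$, and invariance along the whole orbit completes the proof. I do not anticipate a real obstacle: the only delicate point is the bookkeeping with $\sigma^{-1}$ in the action formula, namely recognizing that the action \emph{conjugates} the row permutations rather than merely relabelling them, which is exactly what guarantees that cycle type—and not just the identity of the rows—is preserved.
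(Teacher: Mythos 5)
Your proof is correct and is essentially the paper's own argument: the identity you derive for the rows of $\sigma\M$, namely that the $i$-th row equals $\sigma\circ\psi_{\sigma^{-1}(i)}\circ\sigma^{-1}$, is exactly the relation $f\psi_x f^{-1}=\psi'_{f(x)}$ that the paper extracts directly from the isomorphism, and both proofs conclude by noting that conjugation preserves cycle type and that the relabelling of rows is a bijection. The detour through the orbit language of \cref{lem:orbit-stabilizer-of-action} is only a cosmetic difference.
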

\begin{proof}
    Let $f$ be an isomorphism between the solutions $(X,\cdot)$ and $(X,\star)$. Then, for any $x,y\in X$ 
    \begin{equation*}
        \begin{split}
            f(x\cdot y) &=f(x)\star f(y)\\
            f\psi_x(y) &=\psi'_{f(x)}f(y)\\
            f\psi_x f^{-1}&=\psi'_{f(x)}.\\
        \end{split}
    \end{equation*}
    Hence $\psi_x$ and $\psi'_{f(x)}$ have the same cycle decomposition for all $x\in X.$ The bijectiveness of $f$ completes the proof.
\end{proof}
Note that the converse of the above lemma is not true.
\begin{example}
	\begin{align*}
		A=\begin{array}{c|cccc}
                &1&2&3&4\\
            \hline
			1&1&2&4&3\\
			2&1&2&4&3\\
			3&1&2&4&3\\
			4&1&2&4&3
		\end{array},~
		B=\begin{array}{c|cccc}
		&1&2&3&4\\
            \hline
            1&1&2&4&3\\
		2&1&2&4&3\\
		3&2&1&3&4\\
		4&2&1&3&4
	\end{array}
	\end{align*}
 These are non-isomorphic solutions simply because their permutation groups are non-isomorphic.
\end{example}
\begin{lemma}\label{lem:automorphism-description}
	Let $\psi_i$ denote the $i$-th row of the cycle matrix with the
 corresponding solution $(X,\cdot)$. Then $\alpha\in\mathfrak{S}_n$ is an automorphism of 
	$(X,\cdot)$ if and only if $\alpha\psi_i=\psi_{\alpha(i)}\alpha$.
\end{lemma}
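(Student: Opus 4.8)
The plan is to unwind both conditions into statements about the operation $\cdot$ evaluated at an arbitrary pair of indices and observe that they coincide verbatim, so the proof is a direct translation between two notations rather than a genuine argument. First I would record what the row $\psi_i$ is as a permutation: by definition of the cycle matrix the $i$-th row is $[\,i\cdot 1,\, i\cdot 2,\, \ldots,\, i\cdot n\,]$, so reading it as a function gives $\psi_i(j)=i\cdot j=m_{ij}$. This is genuinely an element of $\mathfrak{S}_n$ precisely because $(X,\cdot)$ is a cycle set, where each left-multiplication map $y\mapsto i\cdot y$ is invertible; I would state this so that the expressions $\alpha\psi_i$ and $\psi_{\alpha(i)}\alpha$ are honestly compositions of bijections of $X=\{1,2,\ldots,n\}$.

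Next I would recall that, by definition, $\alpha\in\mathfrak{S}_n$ is an automorphism of $(X,\cdot)$ if and only if $\alpha(i\cdot j)=\alpha(i)\cdot\alpha(j)$ holds for all $i,j\in X$. The key step is then to rewrite each side of this identity through $\psi$. On the left, $\alpha(i\cdot j)=\alpha(\psi_i(j))=(\alpha\psi_i)(j)$; on the right, $\alpha(i)\cdot\alpha(j)=\psi_{\alpha(i)}(\alpha(j))=(\psi_{\alpha(i)}\alpha)(j)$. Hence the automorphism condition reads $(\alpha\psi_i)(j)=(\psi_{\alpha(i)}\alpha)(j)$ for all $i,j$, which, since two permutations agreeing on every input are equal, is exactly $\alpha\psi_i=\psi_{\alpha(i)}\alpha$ for all $i\in X$. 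Both implications are obtained by reading this chain of equalities forwards and backwards, using that $j$ ranges over all of $X$.

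I do not expect a real obstacle here; the only points requiring a moment of care are that the claimed equation is quantified over all rows $i$, and that the equivalence is applied at the level of functions (equality of permutations is checked pointwise). It is worth noting that this lemma is essentially the special case of the computation already carried out in the proof of \cref{lem:cycle-decomposition}: there an isomorphism $f\colon(X,\cdot)\to(X,\star)$ was shown to satisfy $f\psi_x f^{-1}=\psi'_{f(x)}$, and taking $(X,\star)=(X,\cdot)$ and $f=\alpha$ specializes to $\alpha\psi_i\alpha^{-1}=\psi_{\alpha(i)}$, i.e.\ $\alpha\psi_i=\psi_{\alpha(i)}\alpha$. I would therefore present the argument as a short self-contained verification while pointing the reader to this earlier conjugation identity for context.
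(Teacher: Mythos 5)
Your argument is correct and is essentially the paper's own: the authors omit the proof, referring to the computation in \cref{lem:cycle-decomposition}, which is exactly the conjugation identity $f\psi_x f^{-1}=\psi'_{f(x)}$ that you specialize to $f=\alpha$ and $(X,\star)=(X,\cdot)$. Your version is simply a written-out form of the same pointwise translation between $\alpha(i\cdot j)=\alpha(i)\cdot\alpha(j)$ and $\alpha\psi_i=\psi_{\alpha(i)}\alpha$.
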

\begin{proof}
    The proof follows the line of the proof for \cref{lem:cycle-decomposition}. We omit it here, constraining ourselves from repetition.
\end{proof}
\begin{corollary}
Let $(X,\cdot_\sigma)$ be a permutation solution corresponding to a permutation $\sigma\in\Sym_n$. Then $\Aut(X,\cdot_{\sigma})\equiv \cent_{\Sym_n}(\sigma)$.
Furthermore $\orb((X,\cdot_\sigma))$ consists of permutation solutions $(X,\cdot_\delta)$ corresponding to $\delta$, where $\delta$ has cycle-structure as same as $\sigma\in\Sym_n$. 
\end{corollary}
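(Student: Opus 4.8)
The plan is to treat the two assertions separately, drawing on \cref{lem:automorphism-description} for the statement about automorphisms and on the explicit formula in \cref{defn:symmetric-group-action} for the statement about the orbit. Both arguments rest on the single defining feature of a permutation solution: every row of its cycle matrix is identical, so that $\psi_i=\sigma$ for all $i$, equivalently $i\cdot_\sigma j=\sigma(j)$ for all $i,j$.

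For the automorphism claim I would invoke \cref{lem:automorphism-description}, which characterizes $\alpha\in\Sym_n$ as an automorphism of $(X,\cdot_\sigma)$ precisely by the relations $\alpha\psi_i=\psi_{\alpha(i)}\alpha$ for every $i$. Since here $\psi_i=\psi_{\alpha(i)}=\sigma$ independently of $i$, this entire system collapses to the single equation $\alpha\sigma=\sigma\alpha$. Hence $\alpha$ is an automorphism if and only if it commutes with $\sigma$, which gives $\Aut(X,\cdot_\sigma)=\cent_{\Sym_n}(\sigma)$ as an equality of subgroups of $\Sym_n$.

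For the orbit claim I would compute the action directly. Let $\M$ denote the cycle matrix of $(X,\cdot_\sigma)$ and fix $\tau\in\Sym_n$. Then \cref{defn:symmetric-group-action} yields
\begin{align*}
(\tau\M)_{ij}=\tau\bigl(\tau^{-1}(i)\cdot_\sigma\tau^{-1}(j)\bigr)=\tau\bigl(\sigma(\tau^{-1}(j))\bigr)=(\tau\sigma\tau^{-1})(j).
\end{align*}
The decisive point is that this expression does not depend on $i$, so $\tau\M$ is again the cycle matrix of a permutation solution, namely the one attached to the conjugate $\tau\sigma\tau^{-1}$; that is, $\tau\M$ is the matrix of $(X,\cdot_{\tau\sigma\tau^{-1}})$. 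As $\tau$ ranges over $\Sym_n$, the conjugates $\tau\sigma\tau^{-1}$ run through exactly the permutations sharing the cycle type of $\sigma$, which identifies $\orb((X,\cdot_\sigma))$ with the set of permutation solutions indexed by that conjugacy class.

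There is no genuine obstacle here: the corollary is an immediate specialization of the two preceding lemmas once one records that a permutation solution has constant rows. The only steps demanding even minimal care are verifying in the action formula that the dependence on $i$ truly drops out, and observing that the assignment $\delta\mapsto(X,\cdot_\delta)$ is injective, so that the orbit matches the conjugacy class of $\sigma$ exactly rather than merely surjecting onto it.
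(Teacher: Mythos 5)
Your proof is correct and follows exactly the route the paper intends: the corollary is stated there without proof as an immediate consequence of \cref{lem:automorphism-description} (giving $\alpha\sigma=\sigma\alpha$ since all rows equal $\sigma$) and of the action formula in \cref{defn:symmetric-group-action} together with \cref{lem:orbit-stabilizer-of-action} (giving that $\tau$ sends $(X,\cdot_\sigma)$ to $(X,\cdot_{\tau\sigma\tau^{-1}})$). Your added remarks --- that the $i$-dependence drops out and that $\delta\mapsto(X,\cdot_\delta)$ is injective --- are exactly the small checks needed, so nothing is missing.
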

\begin{corollary}
There are exactly $\pa(n)$ many permutation solutions of size $n$ up to isomorphism.
\end{corollary}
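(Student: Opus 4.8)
The plan is to reduce the enumeration to counting conjugacy classes of the symmetric group. First I would observe that, by \cref{lem:existence-of-matrices}, every permutation $\sigma\in\Sym_n$ is realized as the defining permutation of a permutation solution $(X,\cdot_\sigma)$, namely the cycle matrix all of whose rows equal $[\sigma(1),\sigma(2),\ldots,\sigma(n)]$. Thus the permutation solutions of size $n$ are parametrized by the elements of $\Sym_n$, and the whole problem reduces to determining when two such solutions are isomorphic and then tallying the resulting classes.

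Next I would invoke the preceding corollary together with \cref{lem:orbit-stabilizer-of-action}. By \cref{lem:orbit-stabilizer-of-action}, two cycle matrices yield isomorphic solutions exactly when they lie in the same $\Sym_n$-orbit; and the preceding corollary identifies the orbit of $(X,\cdot_\sigma)$ as the set of all permutation solutions $(X,\cdot_\delta)$ for which $\delta$ has the same cycle structure as $\sigma$. Combining these, $(X,\cdot_\sigma)\cong(X,\cdot_\delta)$ if and only if $\sigma$ and $\delta$ are conjugate in $\Sym_n$. The necessity half of this equivalence can alternatively be read off from \cref{lem:cycle-decomposition}: since each row of a permutation cycle matrix equals $\sigma$, isomorphic permutation solutions are forced to have matching cycle types.

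Finally I would package these facts as a bijection. The assignment sending $(X,\cdot_\sigma)$ to the cycle type of $\sigma$ descends to isomorphism classes by the previous paragraph, is surjective because every cycle type occurs among elements of $\Sym_n$ (each realized by some permutation solution via \cref{lem:existence-of-matrices}), and is injective because permutations of the same cycle type give isomorphic solutions. Hence isomorphism classes of permutation solutions correspond bijectively to cycle types of $\Sym_n$, equivalently to the partitions of $n$, and the count is $\pa(n)$. I expect no genuine obstacle here, since all the structural content is carried by the preceding corollary; the only point requiring a moment's care is the sufficiency direction (same cycle type $\Rightarrow$ isomorphic), which is precisely what the orbit description in that corollary supplies, the remaining steps being the realizability of each cycle type and the classical identification of cycle types of $\Sym_n$ with partitions of $n$.
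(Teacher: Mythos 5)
Your argument is correct and is exactly the route the paper intends: the corollary is stated without proof precisely because it follows from the preceding corollary (orbits of permutation solutions are determined by cycle type) together with \cref{lem:orbit-stabilizer-of-action} and \cref{lem:existence-of-matrices}, which is the chain you assemble. The identification of isomorphism classes with conjugacy classes of $\Sym_n$, hence with partitions of $n$, matches the paper's implicit reasoning.
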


\section{Construction of new solutions using cycle-matrices}\label{sec:construction-old-new}
We start this section by fixing notations. This is crucial for understanding the construction of new solutions. For $i=1,2$ consider solutions $(X_i,\cdot_i)$ on the set $X_i=\{1,2,\ldots,k_i\}$. Then we have two cycle matrices $M_1$ and $M_2$. For $\sigma_1,\sigma_2,\ldots,\sigma_{k_2}\in\Sym_{k_1}$ and $\mu_1,\mu_2,\ldots,\mu_{k_1}\in\Sym_{k_2}$, define the matrix $\mathbb{M}$ to be
\begin{align*}
    \mathbb{M}_{ij}=\begin{cases}
        (M_1)_{ij} & \text{if } 1\leq i,j\leq k_1\\
        (M_2)_{i-k_1,j-k_1}+k_1 & \text{if }k_1+1\leq i,j\leq k_1+k_2\\
        \sigma_{i-k_1}(j) & \text{if } k_1+1\leq i\leq k_1+k_2,~1\leq j\leq k_1\\
        \mu_{i}(j-k_1)+k_1 & \text{if } 1\leq i\leq k_1, k_1+1\leq j\leq k_1+k_2
    \end{cases}.
\end{align*}
This matrix will be put as 
\begin{align*}
    \left[\begin{array}{ccc|ccc}
	     &&&&\mu_1&\\
      &\Huge{X_1}&&&\vdots&\\
      &&&&\mu_{k_1}&\\
      \hline
      &\sigma_1&&&&\\
      &\vdots&&&\Huge{X_2}&\\
      &\sigma_{k_2}&&&&
	\end{array}\right].
\end{align*}
If all $\mu_l$'s (respectively $\sigma_k$'s) are same, say $\mu$ (respectively $\sigma$), then this matrix will be simply put as $\left[\begin{array}{c|c}
     X_1&\mu  \\
     \hline
     \sigma&X_2 
\end{array}\right]$.
There will be more variants of this matrix throughout this section. We hope that the matrices will be clear from the context. We exhibit one example to 
provide a better understanding of the above construction.
\begin{example} For example if we take two cycle matrices 
\begin{align*}
    M_1=\left[\begin{array}{ccc}
         1 & 2 & 3 \\
         1 & 2 & 3\\
         1 & 2 & 3
    \end{array}\right],
    M_2=\left[\begin{array}{cc}
         2 & 1 \\
         2 & 1
    \end{array}\right],
\end{align*}
and $\sigma_1=(1,2,3),\sigma_2=(1,2)(3)$, $\mu_1=(1,2),\mu_2=(1,2),\mu_3=(1)(2)$, then the abovementioned matrix will be
\begin{align*}
    \left[\begin{array}{ccccc}
        1 & 2 & 3 & 5 & 4 \\
        1 & 2 & 3 & 5 & 4 \\
        1 & 2 & 3 & 4 & 5 \\
        2 & 3 & 1 & 5 & 4 \\
        2 & 1 & 3 & 5 & 4 
    \end{array}\right].
\end{align*}
\end{example}
\begin{theorem}\label{thm:identity-partition-any}
	Consider $(X_1,\star_1)$ and $(X_2,\star_2)$ to be two trivial solutions of size $k_1$, and $k_2$ respectively. Let a partition on $X_1$ be $^1X_1,~^2X_1,\cdots,~^kX_1$, where $^iX_1=\{r_i,r_i+1,\cdots, r_{i+1}-1\}$ for $i=1,~2,~\cdots, k-1$ and $^kX_1=\{r_k,r_k+1,\cdots,k_1\}$ and $r_1=1.$  Let $^i\alpha_1\in\Sn(r_i,r_i+1,\ldots,r_{i+1}-1),~^i\alpha_2\in\Sn(k_1+1,k_1+2,\ldots,k_1+k_2)$ for $i=1,2,\cdots, k$ such that $^i\alpha_2 ~^j\alpha_2=~^j\alpha_2~ ^i\alpha_2$ for all $i,~j$. Then the matrix 
\begin{align*}
           \mathbb{M}_{(X_i),(\alpha_i),(),[k]} = 
\left(\begin{array}{@{}c|c@{}}
    \bigxone 
    & 
    \begin{array}{c}
         ~^1{\alpha_2} \\ \hline
         \vdots \\ \hline 
         ~^k{\alpha_2} \\
    \end{array}\\
    \hline
    \begin{array}{c|c|c}
        ~^1{\alpha_1} & \cdots & ~^k{\alpha_1} 
    \end{array}
    & \bigxtwo
\end{array}\right)
\end{align*}
is a cycle matrix.
\end{theorem}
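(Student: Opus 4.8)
The plan is to verify directly that the operation $i\cdot j=\mathbb{M}_{ij}$ makes $U_{k_1+k_2}$ a non-degenerate cycle set. The first step is to read off the row maps $\psi_i\colon j\mapsto i\cdot j$ from the block shape of $\mathbb{M}$. Writing $\sigma:={}^{1}\alpha_1\,{}^{2}\alpha_1\cdots{}^{k}\alpha_1$ for the permutation of $X_1$ acting as ${}^{m}\alpha_1$ on each block ${}^{m}X_1$, one finds: if $i\in{}^{m}X_1\subseteq X_1$ then $\psi_i$ fixes $X_1$ pointwise and acts as ${}^{m}\alpha_2$ on $X_2$ (call it $\beta_m$); if $i\in X_2$ then $\psi_i$ acts as $\sigma$ on $X_1$ and fixes $X_2$ pointwise (call it $\gamma$, the same map for every $i\in X_2$). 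Each $\psi_i$ is a product of a permutation of $X_1$ with one of $X_2$, hence a bijection, so $\psi_i$ is invertible as required. Moreover every $\psi_i$ fixes the side containing $i$, so $i\cdot i=i$ for all $i$; thus $\varphi=\mathrm{id}$, the cycle set is square-free, and in particular non-degenerate.

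It then remains to check the cycle-set identity, which I would rewrite in the equivalent operator form $\psi_{\psi_x(y)}\,\psi_x=\psi_{\psi_y(x)}\,\psi_y$ (as maps) and verify case by case according to the sides of $x$ and $y$. If $x,y\in X_2$, both sides equal $\gamma\gamma$ and there is nothing to prove. If $x\in{}^{p}X_1$ and $y\in{}^{q}X_1$, then $\psi_x(y)=y$ and $\psi_y(x)=x$, so the identity collapses to $\beta_q\beta_p=\beta_p\beta_q$; on $X_1$ both sides are the identity, and on $X_2$ this is exactly ${}^{q}\alpha_2\,{}^{p}\alpha_2={}^{p}\alpha_2\,{}^{q}\alpha_2$, i.e.\ the commuting hypothesis on the ${}^{i}\alpha_2$.

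The mixed cases are where the partition structure is genuinely used. Taking $x\in{}^{p}X_1$ and $y\in X_2$, we have $\psi_x(y)={}^{p}\alpha_2(y)\in X_2$, so $\psi_{\psi_x(y)}=\gamma$, while $\psi_y(x)=\sigma(x)$; because ${}^{p}\alpha_1$ permutes the block ${}^{p}X_1$, the element $\sigma(x)$ again lies in ${}^{p}X_1$, whence $\psi_{\psi_y(x)}=\beta_p$. The identity then becomes $\gamma\beta_p=\beta_p\gamma$, which holds since $\gamma$ is supported on $X_1$ and $\beta_p$ on $X_2$. The remaining case $x\in X_2$, $y\in{}^{q}X_1$ is symmetric, and this completes the verification.

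I expect the only real subtlety to be this mixed case: the argument works precisely because $\sigma$ preserves each block ${}^{p}X_1$, so that the row map attached to $\psi_y(x)$ coincides with the one attached to $x$. This is exactly where the block-diagonal shape of the lower-left corner (the requirement ${}^{i}\alpha_1\in\Sn({}^{i}X_1)$) is needed; together with the commutativity of the ${}^{i}\alpha_2$ used in the $X_1$–$X_1$ case, these are the two points where the hypotheses enter, the rest being routine bookkeeping.
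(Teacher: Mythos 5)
Your proof is correct and follows essentially the same route as the paper's: a direct verification that the row maps are bijections, that the diagonal map is the identity (hence non-degeneracy), and that the cycloid identity holds, by cases according to which blocks the arguments lie in. The only difference is organizational: by writing the identity in the operator form $\psi_{\psi_x(y)}\circ\psi_x=\psi_{\psi_y(x)}\circ\psi_y$ you absorb the third variable $z$ and collapse the paper's seven cases (indexed by the locations of $x,y,z$) into four (indexed by the locations of $x,y$), with the same two hypotheses -- commutativity of the $^i\alpha_2$ and block-preservation by the $^i\alpha_1$ -- entering at the same points.
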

\begin{proof}
Define a binary operation $\star$ on $X=X_1\sqcup X_2$ in the following way;
\begin{align*}
&\star|_{X_1\times X_1}=\star_1,
\star|_{X_2\times X_2}=\star_2,\\
&\star|_{~^iX_1\times X_2}(x,x_2)=~^i\alpha_2(x_2), 
\star|_{X_2\times ~^iX_1}(x_2,x)=~^i\alpha_1(x),    
\end{align*}
for all $x\in ~^iX_1$, $x_2\in X_2$ and $i=1,2,\ldots,k$.
We aim to show that $(X,\star)$ is a cycle set.
It is easy to see from the definition of $\star$ that each row and principal diagonal of the matrix $ \mathbb{M}_{(X_i),(\alpha_i),(),[k]}$ is a permutation.
Now we show the cycloid relation. We do it by considering various cases.

\textbf{Case 1:} If all of $x,y,z$ lies in one of $X_1$ or $X_2$, the cycloid relation trivially holds.

\textbf{Case 2: } $x\in ~^iX_1$, $y\in ~^jX_1$ and $z\in X_2$
\begin{equation*}
	\begin{split}
		(x\star y)\star (x \star z)&=y\star ~^i\alpha_2(z)=~^j\alpha_2~^i\alpha_2(z),\\
		(y\star x)\star (y \star z)&=x\star ~^j\alpha_2(z)=~^i\alpha_2~^j\alpha_2(z).
	\end{split}
\end{equation*}

\textbf{Case 3: } $x\in ~^iX_1$, $y\in ~X_2$ and $z\in ~^jX_1$
\begin{equation*}
	\begin{split}
		(x\star y)\star (x \star z)&=~^i\alpha_2
		(y)\star z=~^j\alpha_1(z),\\
		(y\star x)\star (y \star z)&=~^i\alpha_1(x)\star ~^j\alpha_1(z)=~^j\alpha_1(z).
	\end{split}
\end{equation*}

\textbf{Case 4: } $x\in ~^iX_1$, $y\in ~X_2$ and $z\in X_2$
\begin{equation*}
	\begin{split}
		(x\star y)\star (x \star z)&=~^i\alpha_2
		(y)\star ~^i\alpha_2(z)=~^i\alpha_2(z),\\
		(y\star x)\star (y \star z)&=~^i\alpha_1(x)\star z=~^i\alpha_2(z).
	\end{split}
\end{equation*}

\textbf{Case 5: } $x\in X_2$, $y\in ~^iX_1$ and $z\in ~^jX_1$
\begin{equation*}
	\begin{split}
		(x\star y)\star (x \star z)&=~^i\alpha_1
		(y)\star ~^j\alpha_1(z)=~^j\alpha_1(z),\\
		(y\star x)\star (y \star z)&=~^i\alpha_2(x)\star z=~^j\alpha_2(z).
	\end{split}
\end{equation*}

\textbf{Case 6: } $x\in X_2$, $y\in ~^iX_1$ and $z\in X_2$
\begin{equation*}
	\begin{split}
		(x\star y)\star (x \star z)&=~^i\alpha_1
		(y)\star z=~^i\alpha_2(z),\\
		(y\star x)\star (y \star z)&=~^i\alpha_2(x)\star ~^i\alpha_2(z)=~^i\alpha_2(z).
	\end{split}
\end{equation*}

\textbf{Case 7: } $x\in X_2$, $y\in ~X_2$ and $z\in ~^iX_1$
\begin{equation*}
	\begin{split}
		(x\star y)\star (x \star z)&=y\star ~^i\alpha_1(z)=~^i\alpha_1~^i\alpha_1(z),\\
		(y\star x)\star (y \star z)&=x\star ~^i\alpha_1(z)=~^i\alpha_1~^i\alpha_1(z).
	\end{split}
\end{equation*}
Therefore $(X,\star)$ is a cycle set and, the cycle matrix is given by the above matrix.
\end{proof}
Continue with the notation of $\cref{thm:identity-partition-any}.$ 
\begin{corollary}\label{cor:multipermutation 2}
The constructed cycle-matrix $\mathbb{M}_{(X_i),(\alpha_i),(),[k]}$, is a multipermutation solution of level $2.$
\end{corollary}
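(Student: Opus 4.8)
The plan is to compute the retraction $Ret(X)$ of the solution $(X,\star)$ produced in \cref{thm:identity-partition-any} explicitly, and to show that the induced cycle set on $Ret(X)$ is a trivial solution. This forces $|Ret^2(X)|=1$ while $|Ret(X)|>1$, giving level exactly $2$.

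First I would read the row maps $\psi_x$ off the block form of $\mathbb{M}_{(X_i),(\alpha_i),(),[k]}$. For $x\in {}^iX_1$ the map $\psi_x$ fixes every element of $X_1$ (the solution being trivial there) and acts as ${}^i\alpha_2$ on $X_2$; hence $\psi_x$ depends only on the block index $i$. For $x\in X_2$ the map $\psi_x$ acts as ${}^j\alpha_1$ on each block ${}^jX_1$ and fixes $X_2$ pointwise; hence $\psi_x$ is the \emph{same} map for every $x\in X_2$. Consequently the classes of the relation $x\sim y$ are: one class $\overline{X_2}$ absorbing all of $X_2$, together with the classes arising from the blocks ${}^iX_1$ (two blocks merging precisely when their maps ${}^i\alpha_2$ coincide). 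In particular the solution is retractable, and since $\overline{X_2}$ is distinct from the block classes, $|Ret(X)|>1$.

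Next I would compute the induced operation $\bar x\cdot\bar y=\overline{x\star y}$ on $Ret(X)$ by running through the four location patterns of $(x,y)$. When $x,y\in X_1$ or $x,y\in X_2$, triviality gives $x\star y=y$. When $x\in {}^iX_1$ and $y\in X_2$, we have $x\star y={}^i\alpha_2(y)\in X_2$, so $\overline{x\star y}=\overline{X_2}=\bar y$. When $x\in X_2$ and $y\in {}^jX_1$, we have $x\star y={}^j\alpha_1(y)\in {}^jX_1$, again in the class of $y$ because ${}^j\alpha_1$ permutes ${}^jX_1$. Thus in every case $\bar x\cdot\bar y=\bar y$, i.e.\ $Ret(X)$ is the trivial solution, which retracts to a single point. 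Therefore $|Ret^2(X)|=1$, and together with $|Ret(X)|>1$ this yields level exactly $2$.

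The only real subtlety — the main obstacle — is the verification in the third paragraph that the retracted product always lands in the class of the second argument. This rests on the two structural facts that each ${}^i\alpha_2$ maps $X_2$ into $X_2$ and each ${}^j\alpha_1$ maps its own block ${}^jX_1$ into itself, so that the off-diagonal terms never escape the relevant equivalence class; everything else is bookkeeping with the block description of the matrix.
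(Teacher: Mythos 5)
Your proposal is correct and follows essentially the same route as the paper, whose entire proof is the one-line assertion that $Ret(\mathbb{M}_{(X_i),(\alpha_i),(),[k]})$ is the cycle matrix of a trivial solution; you simply verify that assertion in detail by checking that $\bar{x}\cdot\bar{y}=\bar{y}$ in each block configuration. The only caveat (shared with the paper, which does not address it either) is that $|Ret(X)|>1$ fails in the degenerate case where every $^i\alpha_1$ and $^i\alpha_2$ is the identity, in which case the solution is trivial and the level is $1$ rather than $2$.
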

\begin{proof}
    $Ret(\mathbb{M}_{(X_i),(\alpha_i),(),[k]})$ is a cycle-matrix of a trivial solution.
\end{proof}
\begin{corollary}
    Every finite abelian group is a permutation group.
\end{corollary}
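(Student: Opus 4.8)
The plan is to compute the permutation group $\mathcal{G}$ of the solution produced by \cref{thm:identity-partition-any} and then, for a prescribed finite abelian group $A$, to exhibit a choice of the data making $\mathcal{G}\cong A$. Recall that for a cycle set the permutation group is generated by the row maps, $\mathcal{G}=\langle \psi_x : x\in X\rangle$, where $\psi_x(y)=x\star y$ is exactly the $x$-th row of the cycle matrix; so the whole argument reduces to reading off the rows of $\mathbb{M}_{(X_i),(\alpha_i),(),[k]}$.

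First I would determine these rows from the defining formula of $\star$ in the proof of \cref{thm:identity-partition-any}. For $x\in {}^iX_1$ the trivial product $\star_1$ makes $\psi_x$ fix $X_1$ pointwise, while $\star|_{{}^iX_1\times X_2}$ makes it act as ${}^i\alpha_2$ on $X_2$; thus $\psi_x$ is supported on $X_2$ and equals ${}^i\alpha_2$ there. For $x\in X_2$ the map $\psi_x$ fixes $X_2$ pointwise (trivial product $\star_2$) and acts on each block ${}^jX_1$ as ${}^j\alpha_1$, so it equals the single permutation $\beta:={}^1\alpha_1\cdots {}^k\alpha_1$ supported on $X_1$, independently of $x$. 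Because the first family is supported on $X_2$ and the second on $X_1$, the two commute and $\mathcal{G}=\langle\beta\rangle\times\langle {}^1\alpha_2,\ldots,{}^k\alpha_2\rangle$; the second factor is abelian by the commuting hypothesis ${}^i\alpha_2\,{}^j\alpha_2={}^j\alpha_2\,{}^i\alpha_2$.

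Next I would specialize the data. Write $A\cong \mathbb{Z}/n_1\times\cdots\times\mathbb{Z}/n_r$ with each $n_i\ge 2$ (a trivial factor contributes nothing, and the trivial group is realized by the one-point trivial solution). Take $X_1$ of size $k_1=r$ split into $r$ singleton blocks ${}^iX_1=\{i\}$, set every ${}^i\alpha_1=\mathrm{id}$ so that $\beta=\mathrm{id}$, take $X_2$ of size $k_2=n_1+\cdots+n_r$, and let ${}^i\alpha_2$ be an $n_i$-cycle on the $i$-th block of $n_i$ consecutive points of $X_2$. These cycles are pairwise disjoint, hence commute, so the hypotheses of \cref{thm:identity-partition-any} are met and $\mathbb{M}_{(X_i),(\alpha_i),(),[k]}$ is a cycle matrix; moreover $\mathcal{G}=\langle {}^1\alpha_2,\ldots,{}^r\alpha_2\rangle\cong \mathbb{Z}/n_1\times\cdots\times\mathbb{Z}/n_r\cong A$.

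The only genuinely delicate point is the first one: correctly identifying the permutation group of the associated SYBE with the group generated by the rows $\psi_x$, and then reading those rows off the block form of $\mathbb{M}_{(X_i),(\alpha_i),(),[k]}$. Once the rows are seen to split into one family supported on $X_1$ and one supported on $X_2$, the direct-product decomposition and the abelianness are immediate, and the realization of an arbitrary finite abelian group becomes the routine bookkeeping of fitting disjoint cycles of the prescribed lengths $n_1,\ldots,n_r$ into $X_2$.
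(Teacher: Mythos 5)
Your proposal is correct and follows essentially the same route as the paper: both apply \cref{thm:identity-partition-any} with trivial solutions, blocks of $X_1$ chosen so that the ${}^i\alpha_1$ contribute nothing, and commuting permutations ${}^i\alpha_2$ of $X_2$ generating the target group. Your version is in fact the more careful one --- you actually read off the rows to identify $\mathcal{G}$ as $\langle{}^1\alpha_2,\ldots,{}^k\alpha_2\rangle$ and you take $X_2$ of size $n_1+\cdots+n_r$ so that disjoint cycles realize $A$ faithfully, whereas the paper's stated parameters ($|X_1|=1$, $|X_2|=r$) do not literally fit the theorem's hypotheses (with $|X_1|=1$ there is only one block and hence only one ${}^1\alpha_2$), so your bookkeeping repairs rather than merely repeats the published argument.
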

\begin{proof}
    Let $H=\langle \sigma_1,\sigma_2,\cdots,\sigma_r\rangle$ to be a finite abelian group.
    Then take $X_1$ to be of size $1$, $X_2$ of size $r$ and $^i\alpha_2=\sigma_i$. Then applying \cref{thm:identity-partition-any} we have the result.
\end{proof}

\begin{proposition}\label{prop:2-times-2}
Consider $(X_1,\star_1)$ and $(X_2,\star_2)$ to be two solutions of size $k_1$, and $k_2$ respectively. Let $\alpha_i\in\Aut(X_i,\star_i)$, for $i=1,2$. Then the matrix 
\begin{align*}
    \mathbb{M}_{(X_i),(\alpha_i),(12),[~]} =\left[\begin{array}{c|c}
    X_1&\alpha_2
    \\\hline
    \alpha_1
&X_2
    \end{array}\right]
\end{align*}
is a cycle matrix.
\end{proposition}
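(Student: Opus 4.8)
The plan is to read the operation $\star$ on $X = X_1 \sqcup X_2$ directly off the block structure and then verify the non-degenerate cycle set axioms, exactly as in the proof of \cref{thm:identity-partition-any} but now with genuine (rather than trivial) solutions on the diagonal blocks and a single permutation filling each off-diagonal block. Identifying $X_2$ with $\{k_1+1,\dots,k_1+k_2\}$ through the shift by $k_1$, I would set $\star|_{X_1\times X_1}=\star_1$, $\star|_{X_2\times X_2}=\star_2$, and for the mixed products $x\star y=\alpha_2(y)$ when $x\in X_1,\,y\in X_2$ and $y\star x=\alpha_1(x)$ when $y\in X_2,\,x\in X_1$. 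By construction the associated matrix is precisely $\mathbb{M}_{(X_i),(\alpha_i),(12),[~]}$, so it suffices to show that $(X,\star)$ is a non-degenerate cycle set.

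The invertibility and non-degeneracy conditions are immediate and I would dispose of them first. For $x\in X_1$ the map $y\mapsto x\star y$ restricts to the bijection $y\mapsto x\star_1 y$ on $X_1$ and to the permutation $\alpha_2$ on $X_2$, hence is a bijection of $X$; the case $x\in X_2$ is symmetric. Likewise the squaring map $x\mapsto x\star x$ restricts to the bijections $\varphi_1$ and $\varphi_2$ on $X_1$ and $X_2$ respectively, so it is a bijection of $X$, giving non-degeneracy.

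The substance is the cycloid relation $(x\star y)\star(x\star z)=(y\star x)\star(y\star z)$, which I would check by splitting into the eight cases according to the locations of $x,y,z$; this case analysis is the main obstacle. The two pure cases hold by the axioms for $\star_1$ and $\star_2$. When $x$ and $y$ lie on the same side but $z$ on the other, both sides collapse to $\alpha_i^2(z)$, where $i$ indexes the side containing $z$; here only the fact that every off-diagonal row is one fixed permutation is used. In the four remaining cases, where $x$ and $y$ straddle the partition, each side of the identity reduces to an expression of the form $\alpha_i(u\star_i v)$ against $\alpha_i(u)\star_i\alpha_i(v)$ (again with $i$ the side of $z$), and these agree \emph{precisely because} $\alpha_i\in\Aut(X_i,\star_i)$. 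This is the one point at which the automorphism hypothesis is essential. Assembling the eight cases confirms the cycloid relation, so $(X,\star)$ is a non-degenerate cycle set and the matrix is a cycle matrix.
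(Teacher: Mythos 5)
Your proposal is correct and follows essentially the same route as the paper: the same definition of $\star$ on $X_1\sqcup X_2$, the same bijectivity checks, and the same case analysis of the cycloid relation in which the mixed cases reduce to $\alpha_i(u\star_i v)=\alpha_i(u)\star_i\alpha_i(v)$ via the automorphism hypothesis. If anything, you are slightly more explicit than the paper (which leaves the final equality in its Cases 3 and 4 implicit) about where $\alpha_i\in\Aut(X_i,\star_i)$ is actually used.
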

\begin{proof}
    Define the following binary operation `$\star$' on $X=X_1\sqcup X_2$ in the following way:
    \begin{align*}
      \star|_{X_1\times X_1}=\star_1, & \star|_{X_1\times X_2}(x_1,x_2)=\alpha_2(x_2),\\
	\star|_{X_2\times X_1}(x_2,x_1)=\alpha_1(x_1), & \star|_{X_2\times X_2}=\star_2,   
    \end{align*}
	 for all $x_i\in X_i$, $i=1,2$. We show that $(X,\star)$ is a cycle set.
	Assuming $x\in X$, from the definition of $\star$ we get that,
 \begin{align*}
     &^X\psi_x(x_1)=^{X_1}\psi_x(x_1),~^X\psi_x(x_2)=\alpha_2(x_2),\\
	&^X\psi_x(x_2)=^{x_2}\psi_x(x_2),~^X\psi_x(x_1)=\alpha_1(x_1),
 \end{align*} for $x\in X_2,~x_1\in X_1$ and $x_2\in X_2.$ This proves that $^X\psi_x$ is a permutation on $X.$ Next let us denote the diagonal maps for $X_1$ and $X_2$ by $\pi_1$ and $\pi_2$ respectively. Then the map $x\mapsto x\star x$ on $X$ will be of the form $x\mapsto \pi_1(x)$ if $x\in X_1$ and $x\mapsto \pi_2(x)$ for $x\in X_2$. Hence the diagonal map $x\mapsto x\star x$ is bijective on $X.$ We finally show that the cycloid relation holds.
	The proof will be divided into several cases.
	Let $x,~y$ and $z\in X$. We start with the cases where $x\in X_1$.
 
\textbf{Case 1: ($y,~z\in X_1$)}
	In this case, the cycloid relation trivially holds.
 
\textbf{Case 2: ($y\in X_1$, $z\in X_2$)} 
	\begin{equation*}
		\begin{split}
			(x\star y)\star (x \star z)&=( ^{X_1}\psi_x(y))\star \alpha_2(z)=\alpha_2(\alpha_2(z)),\\
			(y\star x)\star (y \star z)&=( ^{X_1}\psi_y(x))\star \alpha_2(z)=\alpha_2(\alpha_2(z)).
		\end{split}
	\end{equation*}

\textbf{Case 3:  ($z\in X_1$, $y\in X_2$)}
    	\begin{equation*}
    	\begin{split}
    		(x\star y)\star (x \star z)&=\alpha_2(y)\star ^{X_1}\psi_x(z)=\alpha_1( ^{X_1}\psi_x(z)),\\
    		(y\star x)\star (y \star z)&=\alpha_1(x)\star \alpha_1(z)= ^{X_1}\psi_{\alpha_1(x)}(\alpha_1(z)).\\
    	\end{split}
    \end{equation*}

\textbf{Case 4: ($y,~z\in X_2$)}
	\begin{equation*}
		\begin{split}
		   (x\star y)\star (x \star z)&=\alpha_2(y)\star\alpha_2(z)=^{X_2}\psi_{\alpha_2(y)}(\alpha_2(z)),\\
		   (y\star x)\star (y \star z)&=\alpha_1(x)\star ^{X_2}\psi_y(z)=\alpha_2(^{X_2}\psi_y(z)).\\
		\end{split}
	\end{equation*}
 Similarly, we can prove the other equalities. Therefore $(X,\star)$ is a cycle set. The cycle matrix of is clearly seen to be $\mathbb{M}_{(X_i),(\alpha_i),(12)}.$
 \end{proof}
 The above solution will be denoted by $X_1\bigcup\limits_{\alpha_1,\alpha_2}X_2$, hereafter.
\begin{example}
Consider the solutions $(X_1,\star_1)$ and $(X_2,\star_2)$ with cardinality $2$ and $3$ respectively. Further, assume the cycle matrix to be of the form
\begin{align*}
    \begin{bmatrix}
    1&2\\
    1&2
    \end{bmatrix}, \begin{bmatrix}
        1&2&3\\
        1&2&3\\
        1&2&3
    \end{bmatrix}.
\end{align*}
Then taking $\alpha_1=(1,2)$ and $\alpha_2=(1,2,3)$ we get a new solution of cardinality $5$ given by cycle-matrix 
\begin{align*}
    \begin{bmatrix}
        1&2&4&5&3\\
        1&2&4&5&3\\
        2&1&3&4&5\\
        2&1&3&4&5\\
        2&1&3&4&5
    \end{bmatrix}.
\end{align*}
\end{example}

\begin{theorem}\label{thm:2-times-2-general}
Let $X_1,X_2,\cdots, X_l$ be $l$ solutions of size $k_1,k_2,\cdots,k_l$ respectively. Let $\alpha_i\in\Aut(X_i,\cdot_i)$ for $1\leq i\leq l$. Further, consider 
\begin{align*}
\alpha_{1,2,\ldots,t} \in \Aut \left(
    \left(\ldots\left(X_1\bigcup\limits_{\alpha_1,\alpha_2}X_2\right)\ldots\bigcup\limits_{\alpha_{1,2,\ldots,t-1},\alpha_t}X_t
    \right)
\right)
\end{align*}
for $2\leq t\leq l-1$. Then the matrix
\begin{align*}
,    \left[
        \begin{array}{c|c}
            \begin{array}{cc}
            \begin{array}{c|c|}
                \begin{array}{c|c}
                     \bigxone  & \alpha_2 \\ \hline
                     \alpha_1 & \bigxtwo
                 \end{array} & \alpha_3 \\ \hline
                 \alpha_{1,2} & \bigxthree \\ 
                 \hline
            \end{array} 
            & 
            \begin{array}{ccc}
                 &  & \\
                 &  & \\
                 &  & 
            \end{array} 
            \\
            \begin{array}{ccc}
                 &  & \\
                 &  & \\
                 &  & 
            \end{array}
            & 
            \ddots 
        \end{array} 
        & 
        \biga_l
        \\
        \hline
        \biga_{1,2,\ldots,l-1} 
        & 
        X_l
        \end{array}
    \right],
\end{align*}
is a cycle matrix.
\end{theorem}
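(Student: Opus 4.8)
The plan is to argue by induction on $l$, using the single amalgamation of \cref{prop:2-times-2} both as the base case and as the engine of the inductive step. The key observation is that the displayed matrix is nothing but the result of one application of \cref{prop:2-times-2} to the pair consisting of the solution assembled from $X_1,\ldots,X_{l-1}$ and the solution $X_l$; consequently no cycloid relation needs to be re-verified from scratch. For $l=2$ the asserted matrix is exactly the matrix $\mathbb{M}_{(X_i),(\alpha_i),(12)}$ produced in \cref{prop:2-times-2}, so the base case is immediate.

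Assume now the statement holds for $l-1$ solutions, and write
\[
    Y_{l-1}=\left(\ldots\left(X_1\bigcup\limits_{\alpha_1,\alpha_2}X_2\right)\ldots\bigcup\limits_{\alpha_{1,2,\ldots,l-2},\alpha_{l-1}}X_{l-1}\right)
\]
for the solution built from $X_1,\ldots,X_{l-1}$. By the inductive hypothesis $Y_{l-1}$ is a cycle set, and (after relabeling its underlying set as $\{1,2,\ldots,k_1+\cdots+k_{l-1}\}$) its cycle matrix is precisely the top-left block of the displayed matrix. By hypothesis $\alpha_{1,2,\ldots,l-1}\in\Aut(Y_{l-1})$ and $\alpha_l\in\Aut(X_l,\cdot_l)$, so both automorphisms needed to amalgamate $Y_{l-1}$ with $X_l$ are in hand. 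Applying \cref{prop:2-times-2} to the two solutions $Y_{l-1}$ and $X_l$ with these automorphisms shows that $Y_{l-1}\bigcup\limits_{\alpha_{1,2,\ldots,l-1},\alpha_l}X_l$ is a cycle set whose cycle matrix has the block shape
\[
    \left[\begin{array}{c|c}
        Y_{l-1} & \alpha_l\\ \hline
        \alpha_{1,2,\ldots,l-1} & X_l
    \end{array}\right].
\]
Substituting for the corner entry $Y_{l-1}$ the nested cycle matrix supplied by induction yields exactly the matrix in the statement, which is therefore a cycle matrix, completing the induction.

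I expect the only genuinely delicate point to be bookkeeping with the indices rather than any new algebra. One must check that under the relabeling $X_l\mapsto\{k_1+\cdots+k_{l-1}+1,\ldots,k_1+\cdots+k_l\}$ the two constant off-diagonal blocks align correctly — $\alpha_l$ in the top-right acting on the $X_l$-indices, and the constant block $\alpha_{1,2,\ldots,l-1}$ in the bottom-left acting on the $Y_{l-1}$-indices — and that the block produced by the inductive hypothesis occupies the top-left corner of size $k_1+\cdots+k_{l-1}$. Since all casework for the cycloid relation is already discharged once inside \cref{prop:2-times-2}, this index-matching, forced by the need to unwind the iterated amalgam notation carefully, is the sole obstacle, and it is routine.
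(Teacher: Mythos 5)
Your proposal is correct and follows essentially the same route as the paper: the authors also argue by induction on the number of solutions, taking \cref{prop:2-times-2} as the base case and then applying it once more to amalgamate the already-constructed solution $\widetilde{X}=Y_{l-1}$ with $X_l$ via the automorphisms $\alpha_{1,2,\ldots,l-1}$ and $\alpha_l$. The index bookkeeping you flag is likewise treated as routine in the paper.
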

\begin{proof}
    The proof is by induction. Note that the case $n=2$ is the result of \cref{prop:2-times-2}. Now assume the result to be true for $l=n$. 

    Assume $\beta=\alpha_{1,2,\ldots,n+1}$ is an automorphism of $\widetilde{X}=\left(\ldots\left(X_1\bigcup\limits_{\alpha_1,\alpha_2}X_2\right)\ldots\bigcup\limits_{\alpha_{1,2,\ldots,n},\alpha_n}X_n\right)$. Given that $\alpha_l\in\Aut(X_l,\cdot_l)$, using \cref{prop:2-times-2} we get that
    \begin{align*}
        \left[\begin{array}{c|c}
            \widetilde{X} & \alpha_l \\\hline
             \beta & X_l
        \end{array}\right]
    \end{align*}
    is a cycle matrix. Note that this matrix is as same as the matrix given in the statement. This finishes the proof.
\end{proof}
\begin{proposition}\label{prop:3-times-3}
    Consider $X_1$, $X_2$, and $X_3$ to be three solutions of size $k_1,~k_2$ and $k_3$ respectively. Let $\alpha_i\in\Aut(X_i,\cdot_i)$ for $i=1,2,3$. Then the following two matrices
    \begin{align*}
        \mathbb{M}_{(X_i),(\alpha_i),(123),[~]}=\left[\begin{array}{c|c|c}
            X_1 &  & \alpha_3\\
             \hline
            \alpha_1 & X_2 &\\
             \hline
             & \alpha_2 & X_3
        \end{array}\right] ~\text{and}~
        \mathbb{M}_{(X_i),(\alpha_i),(12),[~]}=\left[\begin{array}{c|c|c}
            X_1 & \alpha_2 & \\
             \hline
             & X_2 & \\
             \hline
            \alpha_1 &  & X_3
        \end{array}\right]
    \end{align*}
    are two cycle matrices.
\end{proposition}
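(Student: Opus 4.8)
The plan is to treat both matrices exactly as in \cref{prop:2-times-2} and \cref{thm:identity-partition-any}: read off the binary operation $\star$ on $X=X_1\sqcup X_2\sqcup X_3$ encoded by the block matrix, and then check the three defining properties of a non-degenerate cycle set (each $\psi_x$ a bijection, the diagonal map $\varphi$ a bijection, and the cycloid relation). For the first matrix the rules are $\star|_{X_i\times X_i}=\cdot_i$ together with $x_2\star x_1=\alpha_1(x_1)$, $x_3\star x_2=\alpha_2(x_2)$, $x_1\star x_3=\alpha_3(x_3)$, while every remaining mixed product equals its second argument (the blank blocks encode the identity action); for the second matrix the only non-trivial cross rules are $x_1\star x_2=\alpha_2(x_2)$ and $x_3\star x_1=\alpha_1(x_1)$, again with all other mixed products trivial. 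In either case each $\alpha_i$ maps $X_i$ bijectively onto $X_i$ and the identity action maps a block onto itself, so reading a single row one sees that $\psi_x$ restricts to a bijection on each of $X_1,X_2,X_3$ and is therefore a permutation of $X$; likewise $x\star x=x\cdot_i x$ for $x\in X_i$, so $\varphi$ is the disjoint union of the three (bijective) diagonal maps and hence bijective.

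The substance is the cycloid relation $(x\star y)\star(x\star z)=(y\star x)\star(y\star z)$, which I would verify by running through the placements of $x,y,z$ among the three blocks. When all three lie in one $X_i$ the identity is just the cycloid relation of $(X_i,\cdot_i)$. In every genuinely mixed placement the cross rules above collapse each side to a single $\alpha$-image (or to the relevant block product), and the two sides are then seen to match by direct substitution. The only configurations requiring an algebraic input beyond bookkeeping are the ``doubled'' ones in which two arguments lie in a block $X_i$ and the third lies in the unique block that acts on $X_i$ through $\alpha_i$: for the first matrix these are $(x\in X_2,\,y,z\in X_1)$, $(x\in X_3,\,y,z\in X_2)$, $(x\in X_1,\,y,z\in X_3)$, and for the second matrix they are $(x\in X_1,\,y,z\in X_2)$ and $(x\in X_3,\,y,z\in X_1)$. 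In each such case one side evaluates to $\alpha_i(y)\cdot_i\alpha_i(z)$ and the other to $\alpha_i(y\cdot_i z)$, so the two agree precisely because $\alpha_i\in\Aut(X_i,\cdot_i)$.

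I expect the only real difficulty to be organizational rather than mathematical: there are formally $27$ placements per matrix, and the point is to confirm that nothing breaks in the configurations that chain two off-diagonal steps. The decisive structural feature is that each $\alpha_i$ acts only on its own block $X_i$, while every cross-block action lands back inside one of the three blocks; consequently any surviving iterated off-diagonal product is a \emph{power} $\alpha_i^2$ of a single automorphism and never a product $\alpha_i\alpha_j$ of two distinct ones. This is exactly why — in contrast to \cref{thm:identity-partition-any}, where the maps ${}^i\alpha_2$ all act on the same block and a commutativity hypothesis had to be imposed — no compatibility condition among $\alpha_1,\alpha_2,\alpha_3$ is needed here, and both displayed matrices are cycle matrices.
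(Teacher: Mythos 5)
Your argument is correct, and it takes a genuinely more hands-on route than the paper. The paper's own proof, after writing down the same operation $\star$ on $X_1\sqcup X_2\sqcup X_3$, dispatches the cycloid relation by appealing to \cref{prop:2-times-2}: it observes that each of the three $2\times 2$ sub-block matrices (e.g.\ $\mathbb{M}_{(X_1,X_2),(\alpha_1,\mathrm{id})}$, $\mathbb{M}_{(X_2,X_3),(\alpha_2,\mathrm{id})}$, and the $X_1$--$X_3$ block with $\Delta_3\in\{I,\alpha_3\}$) is already known to be a cycle matrix, and concludes from there. That reduction only literally covers the placements of $x,y,z$ inside a union of two blocks; the six ordered placements with one element in each of $X_1,X_2,X_3$ are not subsumed by any $2\times 2$ submatrix and are left implicit. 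Your direct case analysis fills exactly that residue (and indeed those six cases all collapse to a single $\alpha$-image on each side, as you indicate), so your proof is the more self-contained of the two, at the cost of more bookkeeping. You also isolate precisely where the hypothesis $\alpha_i\in\Aut(X_i,\cdot_i)$ is used --- the ``doubled'' configurations where one side is $\alpha_i(y)\cdot_i\alpha_i(z)$ and the other is $\alpha_i(y\cdot_i z)$ --- and your closing remark explaining why no commutativity condition among $\alpha_1,\alpha_2,\alpha_3$ is needed here (each cross-block action feeds into a \emph{different} block, so iterated off-diagonal steps only ever produce $\alpha_i^2$, never $\alpha_i\alpha_j$), in contrast with the hypothesis $^i\alpha_2\,{}^j\alpha_2={}^j\alpha_2\,{}^i\alpha_2$ in \cref{thm:identity-partition-any}, is a structural observation the paper does not make. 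One tiny caveat: your list of doubled configurations should be read up to the $x\leftrightarrow y$ symmetry of the cycloid relation (e.g.\ $y\in X_2$, $x,z\in X_1$ is also such a case), but since both sides of the relation are symmetric under that swap this costs nothing.
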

\begin{proof}
	Define $\star$ on $X=X_1\sqcup X_2\sqcup X_3$ as follows
 \begin{align*}
    &\star|_{X_i\times X_i}=\star_i,\\
    &\star|_{X_2\times X_3}(x_2,x_3)=x_3, 
    \star|_{X_3\times X_1}(x_3,x_1)=x_1,\star|_{X_1\times X_2}(x_1,x_2)=x_2,\\
	&\star|_{X_1\times X_3}(x_1,x_3)=\alpha_3(x_3),
 \star|_{X_2\times X_1}(x_2,x_1)=\alpha_1(x_1),\star|_{X_3\times X_2}(x_3,x_2)=\alpha_2(x_2),
 \end{align*}
	for all $x_i\in X_i$, $i=1,2,3.$ It is easy to see from the matrix $\mathbb{M}_{(X_i),(\alpha_i),(123),[~]}$ that all rows and the principal diagonal are permutations. So we only need to show the cycloid relation.
    From \cref{prop:2-times-2} observe that $\mathbb{M}_{(X_1,X_2),(\alpha_1, id),(1,2),[~]}$, and $\mathbb{M}_{(X_2,X_3),(\alpha_2, id),(2,3),[~]}$ are cycle-matrices. Also, the matrix 
    \begin{align*}
    \left[\begin{array}{c|c}
         X_1&  \Delta_3 \\
         \hline
         \alpha_1& X_3
    \end{array}\right]    
    \end{align*}
    for $\Delta_3\in\{I,\alpha_3\}$, is a cycle matrix.
    Therefore  $\mathbb{M}_{(X_i),(\alpha_i),(123),[~]}$ (for $\Delta_3=\alpha_3$) and $\mathbb{M}_{(X_i),(\alpha_i),(12),[~]}$ (for $\Delta_3=I$) are a cycle-matrices.
\end{proof}

Note that it is possible to construct two non-isomorphic cycle matrices of the form $\mathbb{M}_{(X_i),(\alpha_i),\theta_1,[~]}$, $\mathbb{M}_{(X_i),(\alpha_i),\theta_2,[~]}$ even if $\theta_1$ and $\theta_2$ have the same cycle decomposition. Here is an example.
\begin{example}
Consider
   \begin{align*}
    A= \begin{array}{c|cccc|ccc|cc}
    & 1 & 2 & 3 & 4 & 5 & 6 & 7 & 8 & 9\\
    \hline
    1 & 1 & 2 & 3 & 4 & 6 & 7 & 5 & 8 & 9\\
    2 & 1 & 2 & 3 & 4 & 6 & 7 & 5 & 8 & 9\\
    3 & 1 & 2 & 3 & 4 & 6 & 7 & 5 & 8 & 9\\
    4 & 1 & 2 & 3 & 4 & 6 & 7 & 5 & 8 & 9\\\hline
    5 & 1 & 2 & 3 & 4 & 5 & 6 & 7 & 9 & 8\\
    6 & 1 & 2 & 3 & 4 & 5 & 6 & 7 & 9 & 8\\
    7 & 1 & 2 & 3 & 4 & 5 & 6 & 7 & 9 & 8\\\hline
    8 & 2 & 1 & 4 & 3 & 5 & 6 & 7 & 8 & 9\\
    9 & 2 & 1 & 4 & 3 & 5 & 6 & 7 & 8 & 9
    \end{array},~B =
    \begin{array}{c|cccc|ccc|cc}
    & 1 & 2 & 3 & 4 & 5 & 6 & 7 & 8 & 9\\
    \hline
    1 & 1 & 2 & 3 & 4 & 5 & 6 & 7 & 9 & 8\\
    2 & 1 & 2 & 3 & 4 & 5 & 6 & 7 & 9 & 8\\
    3 & 1 & 2 & 3 & 4 & 5 & 6 & 7 & 9 & 8\\
    4 & 1 & 2 & 3 & 4 & 5 & 6 & 7 & 9 & 8\\\hline
    5 & 2 & 1 & 4 & 3 & 5 & 6 & 7 & 8 & 9\\
    6 & 2 & 1 & 4 & 3 & 5 & 6 & 7 & 8 & 9\\
    7 & 2 & 1 & 4 & 3 & 5 & 6 & 7 & 8 & 9\\\hline
    8 & 1 & 2 & 3 & 4 & 6 & 7 & 5 & 8 & 9\\
    9 & 1 & 2 & 3 & 4 & 6 & 7 & 5 & 8 & 9
    \end{array}.
\end{align*}
Here $X_1,~X_2$ and $X_3$ are trivial solutions of order $4,~3$ and $2$ respectively, $\alpha$ permutations are $\alpha_1=(1,2)(3,4),~\alpha_2=(5,6,7)$ and $\alpha_3=(8,9)$, and the $\theta$ permutations are $\theta_1=(1,3,2),~\theta_2=(1,2,3)$ respectively. From \cref{lem:cycle-decomposition}, these two cycle-matrices are non-isomorphic.  
\end{example}
Now we generalize the previous results and present the general scenario in the last result of this section.
\begin{theorem}\label{thm:3-times-3-general}
    Let $N$ be a positive integer and $\Lambda=(m_1,m_2,\ldots,m_k)$ be a partition of $N$. Consider $(X_i,\star_i)$ to be a solution of size $m_i$ and $\alpha_i\in\Aut(X_i,\star_i)$ for all $i=1,2,\ldots,k$. For an element $\Theta\in S_k$  consider the matrix with $\mu\nu$-th block defined as,
    \begin{align*}
        \left(\mathbb{M}_{(X_i),(\alpha_i),\Theta,[~]}\right)_{\mu\nu}=\begin{cases}
        X_\mu&\text{if } \mu=\nu\\
        \alpha_\nu&\text{if }\Theta(\mu)=\nu\neq\mu\\
        \text{I}&\text{otherwise}
        \end{cases}.
    \end{align*}
 Then the matrix $\mathbb{M}_{(X_i),(\alpha_i),\Theta,[~]}$ is a cycle-matrix.
\end{theorem}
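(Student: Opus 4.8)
The plan is to exhibit the operation $\star$ on $X = X_1 \sqcup \cdots \sqcup X_k$ that the block matrix encodes and verify directly that $(X,\star)$ is a non-degenerate cycle set, so that $\mathbb{M}_{(X_i),(\alpha_i),\Theta,[~]}$ is its cycle matrix. Explicitly, for $x \in X_\mu$ and $y \in X_\nu$ I would set $x \star y = x \star_\mu y$ when $\mu = \nu$, $x \star y = \alpha_\nu(y)$ when $\Theta(\mu) = \nu \neq \mu$, and $x \star y = y$ otherwise, reading these off the three cases defining the blocks. The single most useful observation is that in every case $x \star y$ again lies in $X_\nu$; that is, the block index of the second argument is preserved. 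Writing $\rho_{\mu,\nu} := \alpha_\nu$ when $\Theta(\mu) = \nu \neq \mu$ and $\rho_{\mu,\nu} := \mathrm{id}_{X_\nu}$ for the remaining $\mu \neq \nu$, the off-diagonal action is the $x$-independent permutation $x \star y = \rho_{\mu,\nu}(y)$, and each $\rho_{\mu,\nu}$ is an automorphism of $(X_\nu, \star_\nu)$ --- trivially for the identity, and by hypothesis for $\alpha_\nu$.

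Non-degeneracy would then be immediate from this description: for fixed $x \in X_\mu$ the map $\psi_x$ restricts to the row map of $(X_\mu,\star_\mu)$ on $X_\mu$ and to the bijection $\rho_{\mu,\nu}$ on each $X_\nu$ with $\nu \neq \mu$, hence is a bijection of $X$; and the squaring map $x \mapsto x \star x = \pi_\mu(x)$ is bijective on each $X_\mu$, where $\pi_\mu$ is the (bijective) squaring map of $(X_\mu,\star_\mu)$.

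The bulk of the work is the cycloid identity $(x\star y)\star(x\star z) = (y\star x)\star(y\star z)$, which I would prove by cases according to the blocks $a, b, c$ containing $x, y, z$, exactly as in \cref{prop:2-times-2} and \cref{prop:3-times-3}. The block-preservation observation trivializes the bookkeeping: both sides always land in $X_c$, and one is left comparing permutations of $X_c$. When $a = b = c$ the identity holds because $(X_a,\star_a)$ is a cycle set. When exactly two of the indices coincide, the identity reduces in each instance to the automorphism property of a single $\rho$; for example, if $b = c \neq a$ then the left side equals $\rho_{a,b}(y)\star_b \rho_{a,b}(z)$ while the right equals $\rho_{a,b}(y \star_b z)$, and these agree precisely because $\rho_{a,b} \in \Aut(X_b,\star_b)$ by the remark above.

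The genuinely new case, and the one I expect to be the crux, is $a, b, c$ pairwise distinct. Here the left side computes to $\rho_{b,c}(\rho_{a,c}(z))$ and the right side to $\rho_{a,c}(\rho_{b,c}(z))$, so the identity amounts to the commutativity $\rho_{b,c}\rho_{a,c} = \rho_{a,c}\rho_{b,c}$ of two permutations of $X_c$. This is where the hypothesis that $\Theta$ is a permutation does the real work: if both $\rho_{a,c}$ and $\rho_{b,c}$ were the non-trivial permutation $\alpha_c$, we would need $\Theta(a) = c = \Theta(b)$ with $a \neq b$, contradicting injectivity of $\Theta$. Hence at least one of the two is the identity, the two permutations commute, and the identity holds. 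This would complete the verification that $(X,\star)$ is a cycle set, and by construction its cycle matrix is $\mathbb{M}_{(X_i),(\alpha_i),\Theta,[~]}$.
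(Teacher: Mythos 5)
Your proof is correct, but it takes a different route from the paper's. The paper argues by induction on $k$, reducing to \cref{prop:2-times-2} by observing that the relevant $2\times 2$ block submatrices (the top-left $n$-block matrix, the bottom-right $n$-block matrix, and the corner configuration involving $X_1$ and $X_{n+1}$) are cycle matrices. You instead write down the operation $\star$ on $X_1\sqcup\cdots\sqcup X_k$ explicitly and verify the cycle-set axioms directly, organizing the cycloid check by the block pattern of $(x,y,z)$. Your approach buys two things the paper's sketch leaves implicit: first, the case where $x,y,z$ lie in three pairwise distinct blocks is not obviously covered by checking $2\times 2$ sub-configurations, and you isolate it as the crux; second, you identify precisely where the hypothesis that $\Theta$ is a permutation is used, namely that injectivity of $\Theta$ forces at least one of $\rho_{a,c},\rho_{b,c}$ to be the identity, so the two permutations of $X_c$ commute --- this is the analogue of the explicit commutativity hypothesis $^i\alpha_2\,^j\alpha_2={}^j\alpha_2\,^i\alpha_2$ imposed in \cref{thm:identity-partition-any}, and the paper's induction never surfaces it. One trivial imprecision: in the subcase $a=b\neq c$ of ``exactly two indices coincide,'' both sides equal $\rho_{a,c}^{2}(z)$ and no automorphism property is needed, so your claim that every such subcase reduces to the automorphism property of a single $\rho$ is slightly overstated; the subcases $b=c\neq a$ and $a=c\neq b$ are the ones that genuinely use $\rho_{a,b}\in\Aut(X_b,\star_b)$ and $\rho_{b,a}\in\Aut(X_a,\star_a)$ respectively. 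This does not affect the validity of the argument.
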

\begin{proof}
    The proof is based on the induction argument and the method of \cref{prop:3-times-3}.
    We start with the first step when $k=2$. 
    Then this is the result \cref{prop:2-times-2}. Now assume the result to be true for $k=n$. 
    For the case, $k=n+1$, note that first considering the matrix for $1\leq \mu,\nu \leq n$, we obtain a cycle matrix.
    Similarly for $2\leq \mu,\nu\leq n+1$ We get another cycle matrix. Now
    \begin{align*}
        \left[\begin{array}{c|c}
             X_1&  \Delta_{n+1}\\\hline
             \Delta^\prime_1& X_{n+1}
        \end{array}\right],
    \end{align*}
    is a cycle matrix for $\Delta_{n+1}\in\left\{id_{X_{n+1}},\alpha_{n+1}\right\}$, $\Delta^{\prime}_{1}\in\left\{id_{X_1},\alpha_{n+1}\right\}$ (using \cref{prop:2-times-2}). Hence it follows that $\mathbb{M}_{(X_i),(\alpha_i),\Theta,[~]}$ is a cycle matrix.
\end{proof}
\section{Construction of multi-permutation solutions}\label{sec:multiperm-solutions}
This section is devoted to the construction of multipermuatation solutions of level $r$ for any $r\geq 1$. These have been studied in the work of Gateva-Ivanova, Cameron (\cite{GICa12}), Jedli\v{c}ka, Pilitowska, and Zamojska-Dzienio (\cite{JePiZa20}) and finally a classification result for second level solutions was obtained recently in \cite{Rum22} by Rump. They are important because, a finite solution $(X,r)$ has the structure group $\mathcal{G}(X,r)$ to be poly-$\mathbb{Z}$ if and only if it is a multipermutation solution (see \cite{JeOk05} and \cite{BaCeVe18}).   
\begin{proposition}\label{prop:multiperm-2}
Consider $X_2$ to be the identity solution and $|X_2|=2$. Construct
\begin{align*}
    X_{2^2}=\left[\begin{array}{c|c}
         X_2&\sigma_2  \\
         \hline
         \sigma_2&X_2
    \end{array}\right],
\end{align*}
where $\sigma_2=(1,2)$. This gives a multipermutation solution of level $2$.
\end{proposition}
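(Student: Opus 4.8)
The plan is to verify directly that the $4\times 4$ matrix $X_{2^2}$ is a cycle matrix, and then compute its retract to confirm level exactly $2$. Concretely, with $X_2$ the trivial solution on $\{1,2\}$ and $\sigma_2=(1,2)$, the construction of \cref{prop:2-times-2} (with $X_1=X_2$ both equal to the identity solution of size $2$, and $\alpha_1=\alpha_2=\sigma_2$) already guarantees that $X_{2^2}=X_2\bigcup\limits_{\sigma_2,\sigma_2}X_2$ is a cycle matrix, since $\sigma_2\in\Aut(X_2,\star_2)$ is automatic for the trivial solution (every permutation is an automorphism of the trivial solution). So the first half of the claim is immediate from \cref{prop:2-times-2}, and no fresh cycloid verification is needed.

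**The main content** is therefore the level count, and here I would \emph{not} invoke \cref{cor:multipermutation 2} directly, because that corollary concerns the matrices $\mathbb{M}_{(X_i),(\alpha_i),(),[k]}$ built from a \emph{partition} of $X_1$ in \cref{thm:identity-partition-any}, whereas $X_{2^2}$ is of the off-diagonal form $\mathbb{M}_{(X_i),(\alpha_i),(12),[~]}$ from \cref{prop:2-times-2}. Instead I would compute $Ret(X_{2^2})$ by hand. Writing out the rows, the first two rows both equal the permutation sending $(1,2,3,4)\mapsto(1,2,4,3)$, and the last two rows both equal $(1,2,3,4)\mapsto(2,1,3,4)$; hence $\psi_1=\psi_2$ and $\psi_3=\psi_4$, so the retract relation $\sim$ has exactly two classes $\{1,2\}$ and $\{3,4\}$.

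**Next I would** identify the induced cycle set on $Ret(X_{2^2})=\{\overline 1,\overline 3\}$ via $\overline x\cdot\overline y=\overline{x\cdot y}$. A short check shows $\overline 1\cdot\overline 1=\overline 1$, $\overline 1\cdot\overline 3=\overline 4=\overline 3$, and symmetrically for $\overline 3$, so the retract is the trivial solution on two elements; its further retract collapses to a single point, giving $|Ret^2(X_{2^2})|=1$ while $|Ret(X_{2^2})|=2>1$. Thus the level is exactly $2$. I should also note that $X_{2^2}$ is genuinely retractable (so the level is well-defined), which is exactly the content of $\psi_1=\psi_2$.

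**The only real obstacle** is bookkeeping: one must be careful that the retract operation is well-defined (already granted by \cite{Ru07} as cited in the introduction) and that the two classes are not accidentally distinguished by the induced operation. Since all off-diagonal blocks are the single permutation $\sigma_2$ acting between the two copies, the induced action on classes is constant, forcing triviality of the retract; this is the step I would write out most carefully, though it is only a finite check. I would present the proof as: (i) cite \cref{prop:2-times-2} for the cycle-matrix property, (ii) exhibit the explicit $4\times4$ matrix and read off the two retract classes, (iii) show the retract is the two-element trivial solution, hence level $2$.
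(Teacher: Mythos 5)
Your proof is correct, and the explicit computation of the $4\times 4$ matrix, the two retract classes $\{1,2\}$, $\{3,4\}$, and the induced trivial solution on $Ret(X_{2^2})$ is all accurate. The route differs from the paper's: the paper simply notes (via \cref{lem:automorphism-description}) that $\sigma_2$ is an automorphism of the trivial solution and then cites \cref{cor:multipermutation 2}, whereas you replace that citation with a direct retract computation. Your stated reason for avoiding the corollary, however, is not quite right: when both $X_1$ and $X_2$ are trivial solutions and one takes the trivial partition $k=1$ of $X_1$, the matrix $\mathbb{M}_{(X_i),(\alpha_i),(),[1]}$ of \cref{thm:identity-partition-any} is exactly the matrix $\left[\begin{smallmatrix} X_1&\alpha_2\\ \alpha_1&X_2\end{smallmatrix}\right]$ of \cref{prop:2-times-2} (the commutation hypothesis on the $^i\alpha_2$ is vacuous for $k=1$), so $X_{2^2}$ does fall under \cref{cor:multipermutation 2} and the paper's appeal to it is legitimate. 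What your hands-on verification buys is transparency on a point the corollary glosses over: its proof only asserts that the retract is a trivial solution, which gives level \emph{at most} $2$; exactness of the level requires $\sigma_2\neq \mathrm{id}$ so that $|Ret(X_{2^2})|>1$, and your computation of $\psi_1=\psi_2=(3,4)\neq(1,2)=\psi_3=\psi_4$ makes this explicit. Both arguments are valid; yours is more self-contained, the paper's is shorter.
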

\begin{proof}
    From \cref{lem:automorphism-description}, $\sigma_2$ is an automorphism of $X_2.$ Now it follows from \cref{cor:multipermutation 2} that $X_{2^2}$ is a multipermutaion solution of level $2$. 
\end{proof}
\begin{theorem}\label{thm:multiperm-m}
For all $m\geq 1$, with notation as of \cref{prop:multiperm-2} the matrix defined as
\begin{align*}
    X_{2^{m+1}}=\left[\begin{array}{c|c}
         X_{2^{m}}&\sigma_{2^m}  \\
         \hline
         \sigma_{2^m}&X_{2^{m}}
    \end{array}\right],
\end{align*}
with $\sigma_{2^m}=\prod\limits_{i=1}^{2^{m-1}}(i,i+2^{m-1})$ gives a multipermutation level $m+1$ solution of YBE.
\end{theorem}
\begin{proof}
It is easy to see for $m=1.$ For $m=2;$
\begin{align*}
    X_{2^{3}}=&\left[\begin{array}{cccc|cccc}
         1&2&4&3&7&8&5&6  \\
         1&2&4&3&7&8&5&6\\
         2&1&3&4&7&8&5&6\\
         2&1&3&4&7&8&5&6\\
         \hline
         3&4&1&2&5&6&8&7\\
         3&4&1&2&5&6&8&7\\
         3&4&1&2&6&5&7&8\\
         3&4&1&2&6&5&7&8
    \end{array}\right]\xrightarrow{Ret(X_{2^3})}
    \left[\begin{array}{cc|cc}
         \bar{1}&\bar{2}&\bar{4}&\bar{3} \\
         \bar{1}&\bar{2}&\bar{4}&\bar{3}\\
         \hline
         \bar{2}&\bar{1}&\bar{3}&\bar{4}\\
         \bar{2}&\bar{1}&\bar{3}&\bar{4}
    \end{array}\right]=X_{2^{2}}\\\xrightarrow{Ret^2(X_{2^3})}
        &\left[\begin{array}{cc}
         \bar{\bar{1}}&\bar{\bar{2}}\\
         \bar{\bar{1}}&\bar{\bar{2}}
    \end{array}\right]\xrightarrow{Ret^3(X_{2^3})}
    [\Bar{\Bar{\Bar{1}}}],
\end{align*}
where $\Bar{1}=\{1,2\},\Bar{2}=\{3,4\},\Bar{3}=\{5,6\},\Bar{4}=\{7,8\},\bar{\bar{1}}=\{\Bar{1},\Bar{2}\},\bar{\Bar{2}}=\{\Bar{3},\Bar{4}\}~and~\Bar{\Bar{1}}=\{\Bar{\Bar{\bar{1}}},\Bar{\Bar{2}}\}.$ From \cref{prop:multiperm-2}, \cref{lem:automorphism-description} and \cref{prop:2-times-2}, it follows that $X_{2^3}$ is cycle-matrix. Therefore $X_{2^3}$ is a multipermuation level $3$ cycle-matrix.\\
It is clear that in $X_{2^{m+1}},$ $\psi_{2i-1}=\psi_{2i}$  for $i=1,2,\cdots,2^{m}$ and $\psi_{2i}\neq \psi_{2i+1}$ for $i=1,2,\cdots,2^m-1$, therefore $\Bar{i}=\{2i-1,2i\}$ for $i=1,2,\cdots,2^{m}$. Notice that $\psi_i(2j-1),~\psi_i(2j)$ lie in the same class. The matrix $X_{2^{m+1}}$ has four blocks say $B_{ij}$ and each of order $2^{m}\times 2^{m}$. Each row in $B_{12}$ and in $B_{21}$ is, $$[2^{m}+2^{m-1}+1,2^{m}+2^{m-1}+2,\cdots,2^{m+1},2^{m}+1,2^m+2,\cdots, 2^{m}+2^{m-1}]~~and$$ $$[2^{m-1}+1,2^{m-1}+2,\cdots,2^{m},1,2,\cdots, 2^{m-1}]$$ respectively. With this observation and the fact that $\psi_{\Bar{i}\cdot\Bar{j}}=\psi_{\overline{i.j}}$ gives,
$$Ret(X_{2^{m+1}})=X_{2^m}.$$ Now the proof follows from induction.
\end{proof}
\subsection*{Acknowledgement} We thank Prof. Arvind Ayyer (IISc, Bangalore) and, Prof. Manoj Kumar Yadav (HRI, Prayagraj) for their interest in this work. In a personal communication, it was conveyed by Prof. Leonardo Vendramin (Vrije Universiteit Brussel) that similar matrices were used by him to calculate the number of SYBEs for $n\leq 10$ in \cite{AkMeVe22}. We thank Prof L. Vendramin for pointing out the paper \cite{AkMeVe22} to us.
\bibliographystyle{siam}

\end{document}